\PassOptionsToPackage{dvipsnames}{xcolor}
\documentclass[a4paper]{amsart}
\usepackage[utf8]{inputenc}
\usepackage[english]{babel}
\usepackage[autostyle]{csquotes}
\usepackage[square,numbers]{natbib}
% \addbibresource{biblio.bib}
%\biboptions{comma}
\usepackage{amsmath}
\usepackage{amsthm}
\usepackage{array}
\usepackage{pgfplots}
\usepackage{amssymb}
\usepackage{graphicx}
\usepackage{caption}
\usepackage{sidecap}
\usepackage{mathtools}
\usepackage{mathrsfs}
\mathtoolsset{showonlyrefs}
\usepackage{enumerate}
\usepackage{enumitem}
\usepackage{esint}
\setlist[itemize]{leftmargin=5mm}
\setlist[enumerate]{leftmargin=*}
\setlist[description]{leftmargin=3mm, style=nextline}

\usepackage[a4paper,top=1.5cm, bottom=1.5cm, left=2cm, right=2cm]{geometry}

\usepackage{scalerel}
\usepackage{longtable}
\usepackage{multicol}
\usepackage{multirow}

\usepackage[colorlinks = true, urlcolor = blue, citecolor = PineGreen, linkcolor = blue]{hyperref}

\usepackage[bottom]{footmisc}

\setcounter{tocdepth}{3}%			Stabilisce il livello di profondità dell'indice: 1:solo sezioni; 2:sottosezioni; 3:sottosottosezioni...

%%%%% STRUMENTI PER LA REVISIONE%%%%%%%%
%\usepackage{refcheck}		%Questo mostra le etichette che non hai mai richiamato
%\usepackage{showkeys}		%Questo mostra le etichette. Commentare prima della versione finale

\newcommand{\R}{{\mathbb R}}
\newcommand{\N}{{\mathbb N}}

\newcommand{\Lc}{\mathcal{L}}

\newcommand{\weakly}{\rightharpoonup}

\newcommand{\de}{\partial}
\newcommand{\supp}{\mathrm{supp}\,}

\newcommand{\norm}[1]{\left\| #1 \right\|}

\newcommand{\iprod}[2]{\left\langle #1 ; #2 \right\rangle}

\DeclareMathOperator{\dist}{dist}

\DeclareMathOperator{\rank}{rank}
\DeclareMathOperator{\dive}{div}
\DeclareMathOperator{\curl}{curl}

\renewcommand{\phi}{\varphi}

\renewcommand{\ge }{\geq}
\renewcommand{\le }{\leq}
\renewcommand{\hat }{\widehat}

\newcommand{\haus}{\mathcal{H}}

\numberwithin{equation}{section}

	\newtheorem{theorem}{Theorem}[section]
	
	\newtheorem{proposition}[theorem]{Proposition}
	\newtheorem{lemma}[theorem]{Lemma}
	\newtheorem{corollary}[theorem]{Corollary}
	\newtheorem{conjecture}[theorem]{Conjecture}
	
	\newtheorem{question}[theorem]{Question}
	\newtheorem*{theorem*}{Theorem}
	\newtheorem*{proposition*}{Proposition}
	\newtheorem*{lemma*}{Lemma}
	\newtheorem*{corollary*}{Corollary}
	\newtheorem*{conjecture*}{Conjecture}
	\newtheorem*{statement*}{Statement}
	
	\newtheorem*{claim*}{Claim}
\theoremstyle{definition}
	\newtheorem{definition}[theorem]{Definition}
	\newtheorem{example}[theorem]{Example}
	
	\newtheorem*{definition*}{Definition}
	
	\newtheorem{hypo}[theorem]{Condition}
\theoremstyle{remark}
	\newtheorem{remark}[theorem]{Remark}

\makeatletter
\renewenvironment{proof}[1][\proofname]{\par
  \pushQED{\qed}%
  \normalfont \topsep6\p@\@plus6\p@\relax
  \trivlist
  \item[\hskip\labelsep
        \bfseries%						\bfseries for bold and \scshape for small capital
    #1\@addpunct{.}]\ignorespaces
}{%
  \popQED\endtrivlist\@endpefalse
}
\makeatother

\newcommand*\dif{\mathop{}\!\mathrm{d}}
\newcommand{\Ac}{\mathcal{A}}
\newcommand{\Bc}{\mathcal{B}}

\newcommand{\Hc}[1]{\haus^{#1}}
\newcommand{\hu}{\haus^{d-1}}

\newcommand{\rest}{\llcorner}

\newcommand{\Sph}{\mathbb{S}}

\renewcommand{\Mc}{\mathcal{M}}

\newcommand{\face}[1]{\Gamma_{#1}}

\newcommand{\cinf}[2]{C^{\infty}(\R^{#1},\R^{#2})}
\newcommand{\ccinf}[2]{C_c^{\infty}(\R^{#1},\R^{#2})}
\newcommand{\ind}[1]{\mathbf{1}_{#1}}
\renewcommand{\l}{\ell}
\newcommand{\M}{\mathcal{M}}
\newcommand{\normu}[1]{\norm{#1}_{L^1}}

\newcommand{\op}[1]{\mathscr{Op}(#1)}

\DeclarePairedDelimiterX{\scal}[2]{\langle}{\rangle}{#1, #2}

\DeclareMathOperator{\spn}{span}

%Pacchetti aggiunti da Luigi
\usepackage{aligned-overset}
\usepackage{pgfplots}
\usepackage{tikz,tikz-3dplot}
\usepackage{float}
\usepackage{standalone}

%Opzioni e comandi ulteriori aggiunti da Luigi
\colorlet{luigi}{green!55!gray}%Please define your color for notes.
\colorlet{carlo}{blue!40!gray}
\DeclareMathOperator{\Id}{Id}
\usetikzlibrary {lindenmayersystems, arrows,decorations.markings,intersections,patterns,shadings,calc,shapes,positioning, bending,fadings, angles, pgfplots.fillbetween}
\pgfplotsset{compat=newest}
\usepgfplotslibrary{fillbetween}

%Opzioni e comandi ulteriori aggiunti da Carlo
\newcommand{\Pc}{\mathcal{P}}
\newcommand{\Fc}{\mathcal{F}}

\title{Non-rigidity of the absolutely continuous part of $\mathcal{A}$-free measures}

\author[L.~De Masi]{Luigi De Masi}
\address{\textit{L.~De Masi:} Dipartimento di Matematica, Università di Trento, Via Sommarive 14, I–38123 Trento, Italy.}
\email{luigi.demasi@unitn.it}

\author[C.~Gasparetto]{Carlo Gasparetto}
\address{\textit{C.~Gasparetto:} Dipartimento di Matematica, Università di Pisa, Largo Bruno Pontecorvo 5, I–56127 Pisa, Italy.}
\email{carlo.gasparetto@dm.unipi.it}

\keywords{$\Ac$-free measures, Lusin property}

\begin{document}
\bibliographystyle{dinat}
\maketitle

\begin{abstract}
We generalize a result by Alberti, showing that, if a first-order linear differential operator $\Ac$ belongs to a certain class, then any $L^1$ function is the absolutely continuous part of a measure $\mu$ satisfying $\Ac\mu=0$. When $\Ac$ is scalar valued, we provide a necessary and sufficient condition for the above property to hold true and we prove dimensional estimates on the singular part of $\mu$. Finally, we show that operators in the above class satisfy a Lusin-type property.
\end{abstract}

\section{Introduction and main results}\label{sec:intro}
In \citep[Theorem 3]{alberti1991lusin} it was shown that any $L^1$ vector field is the absolutely continuous component of the distributional gradient of some $SBV$ function. In other words, given $f\in L^1(\R^d,\R^d)$, there exists $u\in L^1(\R^d)$ and a $(d-1)$-rectifiable finite measure $\sigma$ with values in $\R^d$ such that
\begin{equation}
    Du = f+\sigma,
    \qquad
    |\sigma|(\R^d)\leq C \norm{f}_{L^1(\R^d)},
\end{equation}
where the equation holds in the sense of distributions and the constant $C$ depends only on $d$.

Since any distributional gradient $Du$ satisfies $\curl Du=0$ in the sense of distributions,\footnote{Where $(\curl Du)_{ij} =\de_j u_i-\de_i u_j$ for $1\leq i < j \leq d$.} the condition $Du = f+\sigma$ can be written as $\curl (f +\sigma)=0$.

The question we address in this paper is whether one can prove a similar result if $\curl$ is replaced by some other differential operator:
\begin{question}\label{prb:alberti}
	Let $\Ac\colon \cinf{d}{m}\to\cinf{d}{n}$ be a first-order linear differential operator and $k \in \{0, 1,\dots,d-1\}$. Is it true that for any $f\in L^1(\R^d,\R^m)$ there exists a $k$-rectifiable measure $\sigma$ with values in $\R^m$ such that
	\begin{equation}
		\Ac(f+\sigma)=0 ,
  \qquad
    |\sigma|(\R^d)\leq C \norm{f}_{L^1(\R^d)},
\end{equation}
where the equation holds in the sense of distributions and the constant $C$ depends only on $\Ac$?
\end{question}
We call \emph{$k$-balanceable} those operators for which Question \ref{prb:alberti} has a positive answer.
While \citep[Theorem 3]{alberti1991lusin} gives a positive answer to the above question with $k=d-1$ when $\Ac=\curl$, it is clear that the answer is negative for at least some first-order linear differential operator.
For instance, in the cases $\Ac\phi = D\phi$ or $\Ac\phi = (\curl\phi,\dive\phi)$, it is well-known that a measure satisfying $\Ac\mu=0$ is the distribution induced by a smooth function $f$ that satisfies $\Ac f=0$ in the classical sense: in these cases, Question \ref{prb:alberti} has a negative answer.
In general, the structure of the singular part of a \textit{$\Ac$-free measure} largely depends on the wave cone associated to $\Ac$, as was noted in \citep{dph_rindler_afree_original} (see also \citep{dph_rindler_proceedings, arroyo-rabasa_Lusin, arroyo_rabasa_Young, arroyorabasa2023higher}).

The first main result of the present work characterizes scalar-valued first-order linear differential operators for which Question \ref{prb:alberti} has a positive answer, namely those operators $\Ac\colon \cinf{d}{m}\to\cinf{d}{}$ of the form
\begin{equation}\label{eq:first_order_operator_type}
	\Ac f(x) = \sum_{i=1}^d\sum_{j=1}^mA_{ij}\de_{i}f_j(x)
\end{equation}
for every $f\in \cinf{d}{m}$, where $A=(A_{ij}) \in \R^{d \times m}$.
For instance, the divergence operator is represented as above with the choice $A=\Id$.
\begin{theorem}\label{thm:main_scalar}
	Let $\Ac$ be a scalar-valued first order linear differential operator as in \eqref{eq:first_order_operator_type} with $r \coloneqq \rank A>0$. The following hold:
\begin{enumerate}[label=\alph*)]
\item\label{item:main_positive} $\Ac$ is $k$-balanceable for every $k \geq d+1-r$;
\item\label{item:main_sharpness} there exists $f\in L^1(\R^d,\R^m)$ such that $\Ac(f+\sigma)\neq 0$ for every Radon measure $\sigma$ satisfying\footnote{Here $\Hc{s}$ denotes the $s$-dimensional Hausdorff measure on $\R^d$ and $\sigma\perp\mu$ means that there is some $E\subset \R^d$ such that $\sigma(\R^d\setminus E)=0$ and $\mu(E)=0$.} $|\sigma| \perp \Hc{d+1-r}$. In particular $\Ac$ is not $k$-balanceable for any $k \leq d-r$.
	\end{enumerate}
In particular, if $\rank A=1$, then Question \ref{prb:alberti} has a negative answer for any $k\in\{0,1,\dots,d-1\}$.
\end{theorem}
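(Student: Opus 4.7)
Up to invertible linear changes of variables in $\R^d$ and in the target $\R^m$, and after observing that the components of $f$ lying in $\ker A$ do not enter $\Ac f$, we may assume $m = r$ and $A = (I_r \mid 0)^T$, so that $\Ac f = \sum_{i=1}^r \partial_i f_i$ for $f \colon \R^d \to \R^r$. Split the coordinates as $(x, y) \in \R^r \times \R^{d-r}$. Under this normalisation both the notion of $k$-balanceability and the Hausdorff condition on $|\sigma|$ are preserved.

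For part \ref{item:main_positive}, the ``diagonal'' case $d = r$ is the divergence counterpart of Alberti's theorem: for every $F \in L^1(\R^r, \R^r)$ there is a $1$-rectifiable Radon measure $\sigma_0$ on $\R^r$ with $\dive(F + \sigma_0) = 0$ and $|\sigma_0|(\R^r) \leq C\|F\|_{L^1}$. This can be obtained by dualising Alberti's original construction or by invoking Smirnov's decomposition of normal $1$-currents to realise $F\,\mathcal{L}^r + \sigma_0$ as a superposition of oriented segments. For general $d > r$, apply this slicewise in $y$: for $\mathcal{L}^{d-r}$-a.e.~$y$, solve the $r$-dimensional problem for $F^y(x) := f(x, y)$ to produce a measurable family $y \mapsto \sigma_0^y$ of $1$-rectifiable measures, and set $\sigma(\phi) := \int \sigma_0^y(\phi(\cdot, y))\, dy$. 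A Fubini computation yields $\Ac(f+\sigma) = 0$ distributionally, the bound $|\sigma|(\R^d) \leq C \|f\|_{L^1}$, and $(d+1-r)$-rectifiability of $\sigma$: it is a $1$-rectifiable family parametrised by a $(d-r)$-dimensional Lebesgue transversal.

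For part \ref{item:main_sharpness}, exhibit the explicit field $f(x, y) := ((x_1 - \tfrac{1}{2})\,\mathbf{1}_{[0,1]^d}(x, y),\, 0, \ldots, 0)$, so that $\Ac f = \mathbf{1}_{[0,1]^d} - \tfrac{1}{2}(\delta_0 + \delta_1)(x_1) \otimes \mathbf{1}_{[0,1]^{d-1}}$ is a Radon measure whose Lebesgue-absolutely-continuous part is the nonzero function $\mathbf{1}_{[0,1]^d}$. Suppose by contradiction $\sigma \in \Mc(\R^d, \R^r)$ satisfies $\Ac(f + \sigma) = 0$ with $|\sigma| \perp \Hc{d+1-r}$; since $r \geq 1$ forces $\Hc{d+1-r}$-null $\Rightarrow \mathcal{L}^d$-null, one has $\sigma \perp \mathcal{L}^d$. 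Disintegrate $\sigma = \int \sigma_y \, d\rho(y)$ with $\rho := (\pi_y)_*|\sigma|$ and $\sigma_y \in \Mc(\R^r, \R^r)$, and test $\Ac\sigma = -\Ac f$ against tensor products $\phi(x)\eta(y)$. Writing the Lebesgue decomposition $\rho = \tau \mathcal{L}^{d-r} + \rho_s$, the resulting identity $\dive_x \sigma_y(\phi)\,d\rho(y) = -\Ac f(\phi \otimes \cdot)\,dy$ is absolutely continuous in $y$, which forces $\dive_x \sigma_y = 0$ for $\rho_s$-a.e.~$y$ and, for $\mathcal{L}^{d-r}$-a.e.~$y \in (0,1)^{d-r}$,
\[
\dive_x \sigma_y \;=\; \tfrac{1}{\tau(y)}\Bigl(-\mathbf{1}_{[0,1]^r}(x) + \tfrac{1}{2}(\delta_0 + \delta_1)(x_1)\,\mathbf{1}_{[0,1]^{r-1}}(x_2, \ldots, x_r)\Bigr).
\]
In particular $\dive_x \sigma_y$ is a Radon measure on $\R^r$ with nonzero absolutely-continuous part. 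Federer's Hausdorff slicing inequality gives $|\sigma_y| \perp \Hc{1}$ for $\mathcal{L}^{d-r}$-a.e.~$y$. For $r \geq 2$, \v{S}ilhav\'y's theorem (a vector Radon measure with Radon-measure divergence is absolutely continuous with respect to $\Hc{r-1}$) applied to $\sigma_y$ yields $|\sigma_y| \ll \Hc{r-1}$; since the concentration set $E_y$ of $|\sigma_y|$ has $\Hc{1}(E_y) = 0$ and hence $\Hc{r-1}(E_y) = 0$, this forces $|\sigma_y| = 0$, contradicting $\dive_x \sigma_y \neq 0$. For $r = 1$, a direct one-dimensional antiderivative argument shows that any $\sigma_y \in \Mc(\R)$ with a Radon-measure derivative is Lebesgue-absolutely continuous, again incompatible with $|\sigma_y| \perp \Hc{1}$.

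The most delicate step is the rigorous slicing in part \ref{item:main_sharpness}: one must verify that the disintegration $\sigma = \int \sigma_y\,d\rho$ inherits from $\Ac\sigma = -\Ac f$ the absolute continuity of $\rho$ on $(0,1)^{d-r}$ and the correct divergence for each slice, after which the sharp dimensional regularity result (\v{S}ilhav\'y for $r \geq 2$) closes the argument. The divergence-side Alberti input for part \ref{item:main_positive} also requires care but is essentially available in the literature on rectifiable decompositions of normal currents.
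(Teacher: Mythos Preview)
Your argument for part \ref{item:main_sharpness} is essentially the same as the paper's: disintegrate $\sigma$ along the $\R^{d-r}$ factor, show that the slice $\sigma_y$ has measure divergence, invoke a dimensional rigidity result for such measures, and close via the Hausdorff coarea inequality. The paper uses the sharper statement $|\sigma_y|\ll\Hc{1}$ from Arroyo-Rabasa--De~Philippis--Hirsch--Rindler rather than $|\sigma_y|\ll\Hc{r-1}$, but as you observe, the weaker bound suffices since $\Hc{1}(E_y)=0$ forces $\Hc{r-1}(E_y)=0$ for $r\ge2$. The delicate point you flag (that $\Lc^{d-r}\rest(0,1)^{d-r}\ll\rho$, so that ``$\rho$-a.e.'' transfers to ``$\Lc^{d-r}$-a.e.'') is exactly what the paper handles via its identity \eqref{eq:Hc_d-r_ll_lambda}.

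For part \ref{item:main_positive}, your approach diverges from the paper's and has a real gap. The paper does \emph{not} slice in $y$: instead it reduces (Proposition~\ref{prop:reduction_cubes_basis}) to balancing a single datum $v\,\mathbf 1_E$, and then builds an explicit $k$-rectifiable measure via a branching construction (Lemma~\ref{lmm:1_rect_dive}) crossed with $\Hc{k-1}$. Your fibrewise scheme produces, for each $y$, a $1$-rectifiable $\sigma_0^y$ solving the $r$-dimensional problem for $f(\cdot,y)$, and then sets $\sigma=\int\sigma_0^y\otimes\delta_y\,dy$. The claim that this is $(d{+}1{-}r)$-rectifiable is not justified and is false in general: a merely \emph{measurable} family of $1$-rectifiable sets in $\R^r$ indexed by $y\in\R^{d-r}$ need not sweep out a countably $(d{-}r{+}1)$-rectifiable set, since rectifiability requires covering by Lipschitz images and the support of $\sigma_0^y$ (coming from a Vitali/Smirnov-type construction applied to $f(\cdot,y)$) will vary with $y$ in a way that is only measurable. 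The paper's reduction to a \emph{fixed} model problem is precisely what circumvents this: once $\sigma_0$ is the \emph{same} $1$-rectifiable object for every slice, the product $\sigma_0\times\Hc{k-1}$ is manifestly $k$-rectifiable.

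Two further points on part \ref{item:main_positive}. First, you only treat $k=d{+}1{-}r$; the statement asserts $k$-balanceability for \emph{every} $k\ge d{+}1{-}r$, and a $(d{+}1{-}r)$-rectifiable measure is not $k$-rectifiable for larger $k$, so separate constructions are needed (the paper handles all $k$ at once by choosing the transverse dimension in Lemma~\ref{lmm:1_rect_dive}). Second, the ``divergence counterpart of Alberti's theorem'' you invoke for $d=r$ is exactly the case $r=d$ of the statement you are proving; Smirnov's decomposition does not apply directly because $\dive F$ is not a measure for generic $F\in L^1$, so this input itself requires the kind of explicit construction the paper supplies.
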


A quick summary of the proof of Theorem \ref{thm:main_scalar} is given at the beginning of Section \ref{sec:balance_scalar}.

\begin{remark}[$0$-balanceable operators]\label{rem:0-balanceable}
    Part \ref{item:main_sharpness} of Theorem \ref{thm:main_scalar} in particular implies that the only $0$-balanceable operator is the null one. Indeed, if $r \coloneqq \rank A \neq 0$, then $0 \leq d - r$.   
    % namely with $r \coloneqq \operatorname{rank} A \in \{1,\dots,d\}$, and let us fix the function $f$ given by part \ref{item:main_sharpness} of Theorem \ref{thm:main_scalar}.
    % Any $0$-rectifiable measure $\sigma$ is a summable series of (vector-valued) Dirac deltas on $\mathbb{R}^d$, in particular $|\sigma| \perp \haus^{d+1-r}$. Thus $\Ac(f+\sigma) \neq 0$, implying that $\Ac$ is not $0$-balanceable.
\end{remark}

\begin{remark}\label{rem:sharp_sharpness}
   One cannot replace part \ref{item:main_sharpness} of Theorem \ref{thm:main_scalar} with the stronger
    \begin{equation}
        \Ac(f+\sigma)= 0
        \implies
        |\sigma| \ll \Hc{d+1-r}.
    \end{equation}
    The reason for this is the following: as shown by Example \ref{ex:sharp_sharpness}, $\Ac \mu = 0$ does not imply $|\mu| \ll \Hc{d+1-r}$; for such $\mu$ the relation $\Ac(f+\sigma)=0$ is equivalent to $\Ac(f+\sigma +\mu)=0$ and $\sigma + \mu$ is in general not absolutely continuous with respect to $\Hc{d+1-r}$.
\end{remark}

The second part of the present work concerns vector-valued first order operators, namely those operators of the form $\Ac=(\Ac^1,\dots,\Ac^n)$, where each $\Ac^i$ is a scalar-valued operator, as for $\Ac = \curl$. In this framework, answering Question \ref{prb:alberti} becomes substantially harder.
The reason is that, in this case, the condition $\Ac(f+\sigma)=0$ is a \emph{system} of partial differential equations. Although Theorem \ref{thm:main_scalar} provides a solution for each equation, in general these measures are not solutions for the other equations of the system.

However, it is possible to build a solution of the system $\Ac(f+\sigma)=0$, provided the matrices $A^i$ which define each scalar-valued component $\Ac^i$ of $\Ac$ satisfy some algebraic properties which relate each other, see Condition \ref{hyp:solving_2}. Since its formulation is quite technical, we refer the reader to Section \ref{sec:vector_valued}; here we mention that key parts are a ``shared $\rank$-$2$" property \eqref{eq:hyp_suff_2a} and a ``combined antisymmetry" \eqref{eq:hyp_suff_2c} of matrices 
$A^i$ (see also Remark \ref{rem:intuition_condition} for more intuitions on Condition \ref{hyp:solving_2}). Exploiting these algebraic properties, we can produce $(d-1)$-rectifiable measures which balance $\Ac$.
This is precisely the content of the second main result of this paper:
\begin{theorem}\label{thm:main_vector_valued}
	Let $\Ac\colon \cinf{d}{m}\to\cinf{d}{n}$ be a first-order linear differential operator satisfying Condition \ref{hyp:solving_2}; then $\Ac$ is $(d-1)$-balanceable.
\end{theorem}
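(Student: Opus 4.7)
The plan is to follow the strategy used for the scalar case (Theorem~\ref{thm:main_scalar}\ref{item:main_positive}): reduce the problem to constructing an $\Ac$-free \emph{building block} measure which balances $\lambda\,\ind{Q}$ for an arbitrary $\lambda \in \R^m$ and a fixed unit cube $Q$, and then assemble a global solution via a Calderón--Zygmund-type decomposition. Concretely, the first step is to show that if, for every $\lambda \in \R^m$, one can produce a compactly supported $\Ac$-free measure $\mu_\lambda$ whose absolutely continuous part agrees with $\lambda\,\ind{Q}$ up to a prescribed $L^1$-error, whose singular part is $(d-1)$-rectifiable, and which satisfies $|\mu_\lambda|(\R^d) \leq C|\lambda|$ with $C = C(\Ac)$, then by approximating $f \in L^1(\R^d,\R^m)$ by step functions on ever-finer cubical partitions and summing rescaled, translated copies of the $\mu_{\lambda_k}$ one obtains the required $\sigma$; the $(d-1)$-rectifiability of the singular part is preserved by countable unions of such rectifiable pieces.

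The core of the argument is therefore the construction of $\mu_\lambda$, and this is where Condition~\ref{hyp:solving_2} enters. The shared rank-$2$ property~\eqref{eq:hyp_suff_2a} should provide, for each $\lambda \in \R^m$, two directions $\nu_1,\nu_2 \in \Sph^{d-1}$ and amplitudes $v_1,v_2 \in \R^m$ such that each pair $(\nu_j,v_j)$ lies in the joint wave cone $\ptg{(\nu,v) : \nu \cdot A^i v = 0 \text{ for every } i=1,\dots,n}$ and such that the $v_j$ are related to $\lambda$ in an explicit way (for instance, $v_1 + v_2 = \lambda$). Since plane-wave profiles of the form $v_j\, \psi(\nu_j \cdot x)$ are $\Ac$-free for any smooth scalar $\psi$, one can assemble a piecewise-constant $\Ac$-free distribution on a union of thin slabs whose faces are hyperplanes perpendicular to $\nu_1$ and $\nu_2$. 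The combined antisymmetry~\eqref{eq:hyp_suff_2c} plays the role of the skew-symmetry in the $\curl$ case: it guarantees that at the corners and intersections of these slabs the jump contributions cancel \emph{simultaneously} in all $n$ components of $\Ac$, so that no concentrated lower-dimensional singularities appear and the singular support of $\mu_\lambda$ remains $(d-1)$-rectifiable.

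The main obstacle is the quantitative bookkeeping of the total variation: one has to certify $|\mu_\lambda|(\R^d) \leq C|\lambda|$ with $C$ depending only on $\Ac$. This is delicate because the angles between the directions $\nu_j$ chosen in the joint wave cone depend on $\lambda$, and these angles control the $(d-1)$-measure of the slab interfaces that support the singular part; uniformity therefore requires showing that the representatives $(\nu_j,v_j)$ can be chosen to vary in a controlled way and to stay uniformly transverse as $\lambda$ ranges over $\Sph^{m-1}$. Once such a uniform building block is available, the passage from a single cube to an arbitrary $f \in L^1$ is routine and mirrors the scalar argument: via a Whitney/dyadic decomposition one sums translated and rescaled copies of the $\mu_{\lambda_k}$, and the bound $|\sigma|(\R^d) \leq C \|f\|_{L^1}$ follows by an iteration that absorbs the $L^1$-errors introduced at each step.
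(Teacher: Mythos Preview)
Your proposal has a genuine gap in the core construction. You read Condition~\ref{hyp:solving_2} as supplying, for each $\lambda$, a decomposition $\lambda=v_1+v_2$ with each $(v_j,\nu_j)$ in the joint wave cone, so that plane-wave profiles $v_j\psi(\nu_j\cdot x)$ can be superposed on thin slabs. That is not what the condition says. Condition~\ref{hyp:solving_2} is stated per basis vector $f\in\mathcal F$ and furnishes auxiliary vectors $g_1,\dots,g_p$ with the relations $B^kg_k=e$, $B^\ell g_k\in\spn(e_1,\dots,e_p)$, and the antisymmetry~\eqref{eq:hyp_suff_2c}; nothing here asserts a wave-cone splitting of $f$. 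Moreover, a plane-wave or slab construction is inherently supported on an infinite strip; turning it into a compactly supported $\Ac$-free measure requires closing the strip, and you give no mechanism for doing so. Your appeal to~\eqref{eq:hyp_suff_2c} to ``cancel corner contributions'' is an assertion with no computation behind it, and in fact the antisymmetry is used in the paper for an entirely different purpose.

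The paper's route avoids these issues altogether. Via Proposition~\ref{prop:reduction_cubes_basis} one needs only, for each $f\in\mathcal F$, a single $(d-1)$-rectifiable $\sigma$ with $\Ac(f\ind{E}+\sigma)=0$; since $\mathcal F$ is finite, no uniformity over $\lambda\in\Sph^{m-1}$ is required, so your ``main obstacle'' is a red herring. The building block is constructed on the unit sphere: Proposition~\ref{prop:hyp_sufficient_solving} exhibits an \emph{explicit} density
\[
g(x)=-x_{p+1}\sum_{h=1}^p x_h\,g_h+x_{p+1}^2 f\qquad\text{on }\partial B,
\]
and Lemma~\ref{lmm:equivalent_Afree} reduces $\Ac(f\ind{B}+g\Hc{d-1}\rest\partial B)=0$ to two pointwise identities on $\partial B$: tangentiality $B^k g\perp\nu_{\partial B}$ and the tangential divergence condition $\dive_{\partial B}B^k g=-(B^k f)\cdot\nu_{\partial B}$. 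The relations~\eqref{eq:hyp_suff_2a}--\eqref{eq:hyp_suff_2c} are consumed precisely in checking these two identities; the antisymmetry~\eqref{eq:hyp_suff_2c} makes the mixed terms in $B^\ell g(x)\cdot x$ cancel pairwise, not slab corners. The sphere (rather than a slab) is what gives compact support for free.
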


It is easily checked (see Proposition \ref{prop:scalar_condition_hyp}) that a scalar operator of the form \eqref{eq:first_order_operator_type} satisfies Condition \ref{hyp:solving_2} if and only if 
$A=0$ or $\rank A\ge2$, consistently with Theorem \ref{thm:main_scalar}.

Furthermore, we show in Section \ref{sec:exterior_derivative} that the exterior derivative acting on differential forms satisfies Condition \ref{hyp:solving_2}.
In particular, we prove the following:
\begin{corollary}\label{cor:diff_forms}
    Let $p\in\{1,\dots,d\}$ and let $f\in L^1(\R^d,\Lambda^{p}(\R^d))$ be a differential form of degree $p$.
    Then there exists a $(d-1)$-rectifiable, differential-form-valued measure $\sigma$ such that
    \begin{equation}
        \mathrm{d}(f+\sigma)=0\qquad\mbox{and}\qquad |\sigma|(\R^d)\le C||f||_{L^1(\R^d)}.
    \end{equation}
\end{corollary}
In particular, with the choice $p=1$, Corollary \ref{cor:diff_forms} recovers Alberti's original result \citep[Theorem 3]{alberti1991lusin}.

% We show in section \ref{sec:exterior_derivative} that the exterior derivative acting on differential forms satisfies Condition \ref{hyp:solving_2}, which in particular proves that so does $\mathcal{A}=\curl$, see Remark \ref{rem:alberti_generalization}.
% In particular, Theorem \ref{thm:main_vector_valued} generalizes Alberti's original result \citep[Theorem 3]{alberti1991lusin}.

The third main result of the present paper is a Lusin-type property which generalizes \citep[Theorem 1]{alberti1991lusin}:
\begin{theorem}\label{thm:Lusin_main}
    Let $\Ac\colon \cinf{d}{m}\to\cinf{d}{n}$ be a first-order linear differential operator satisfying Condition \ref{hyp:solving_2} and let $f \in L^1(\Omega,\R^m)$, where $\Omega\subset\R^d$ is an open set with finite measure; for every $\varepsilon>0$ there exist an open set $U \subset \Omega$ and a function $h \in C^0(\Omega,\R^m)$ with the following properties:
    % \noeqref{eq:lusin_a,eq:lusin_b,eq:lusin_c,eq:lusin_d}
    \begin{subequations}\label{eq:lusin}
    \begin{gather}
    \Lc^d(U) < \varepsilon\Lc^d(\Omega);\label{eq:lusin_a}
    \\[2pt]
    f=h \qquad \text{in } \Omega \setminus U;\label{eq:lusin_b}
    \\[2pt]
    \Ac h=0\qquad\text{in }\Omega;\label{eq:lusin_c}
    \\[2pt]
    \norm{h}_{L^p(\Omega)} \leq C \varepsilon^{\frac{1}{p}-1} \norm{f}_{L^p(\Omega)}\label{eq:lusin_d}
    \qquad
    \forall p \in [1,+\infty],
    \end{gather}
    \end{subequations}
where the constant $C$ depends only on $\Ac$.
\end{theorem}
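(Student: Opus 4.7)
The plan is to follow Alberti's proof of \cite[Theorem~1]{alberti1991lusin}, with our Theorem~\ref{thm:main_vector_valued} playing the role of the gradient-balancing result there. I would iterate a one-step approximation lemma and sum the outputs into a smooth $\Ac$-free function.

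The core would be a one-step lemma: for every $f \in L^1(\Omega,\R^m)$ and every $\delta \in (0,1)$, there should exist $g \in C^\infty(\Omega,\R^m)$ with $\Ac g = 0$ and an open set $V \subset \Omega$ with $\Lc^d(V) < \delta$ such that
\begin{equation}
    \norm{f - g}_{L^1(\Omega \setminus V)} \leq \tfrac{1}{2} \norm{f}_{L^1(\Omega)},
    \qquad
    \norm{g}_{L^p(\Omega)} \leq C\, \delta^{\frac{1}{p}-1} \norm{f}_{L^p(\Omega)} \quad \forall p \in [1,+\infty],
\end{equation}
for a constant $C = C(\Ac)$. To prove it, I would apply Theorem~\ref{thm:main_vector_valued} to $f$ and produce a $(d-1)$-rectifiable Radon measure $\sigma$ with $\Ac(f+\sigma) = 0$ and $\abs{\sigma}(\R^d) \leq C \norm{f}_{L^1}$. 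Since $\supp \sigma$ has zero Lebesgue measure, it admits an open neighborhood $V$ with $\Lc^d(V) < \delta$ and $\dist(\supp\sigma, \partial V) \geq \eta$ for some $\eta > 0$; the mollification $g := (f \Lc^d + \sigma) * \rho_\eta$ then lies in $C^\infty(\R^d,\R^m)$, is $\Ac$-free (convolution commutes with $\Ac$), and coincides with $f * \rho_\eta$ on $\Omega \setminus V$. Choosing $\eta$ small enough will give $\norm{f - f * \rho_\eta}_{L^1(\Omega)} \leq \tfrac{1}{2} \norm{f}_{L^1(\Omega)}$, while the $L^p$ estimate comes from $\norm{\rho_\eta}_{L^\infty} \lesssim \eta^{-d}$ coupled with the $(d-1)$-dimensional scaling $\Lc^d(V) \sim \eta\, \haus^{d-1}(\supp\sigma)$ to relate $\eta$ and $\delta$.

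To deduce Theorem~\ref{thm:Lusin_main}, I would iterate. Set $f_0 := f$ and, having chosen $g_k, V_k$ for $k < n$ by applying the lemma to $f_k$ with $\delta_k := \varepsilon \Lc^d(\Omega) 2^{-k-1}$, put $f_{n+1} := (f_n - g_n)\ind{\Omega \setminus V_n}$. The $L^1$ norms decay geometrically, and taking the mollification scales $\eta_n$ to vanish sufficiently fast yields convergence of $h := \sum_n g_n$ in every $C^k(\Omega)$. By construction $h = f$ on $\Omega \setminus U$ with $U := \bigcup_n V_n$ of Lebesgue measure $<\varepsilon \Lc^d(\Omega)$, and $\Ac h = 0$ as a limit of $\Ac$-free smooth fields; summing the one-step $L^p$ estimates along the iteration gives \eqref{eq:lusin_d}.

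The main obstacle will be the $L^p$ bound for $p > 1$ in the one-step lemma: mollification of the singular part $\sigma$ at scale $\eta$ has $L^\infty$-norm of order $\eta^{-d}\abs{\sigma}(\R^d)$, while $V$ is prescribed to have Lebesgue measure only $\delta$. Reconciling these will require exploiting that $\supp\sigma$ is $(d-1)$-rectifiable, so that its $\eta$-tubular neighborhood has volume linear in $\eta$, and this should allow one to tune $\eta$ to $\delta$ in the correct way to recover the exponent $1/p-1$. A secondary point is upgrading the iterative convergence of $\sum g_n$ from $L^1$ to $C^\infty$, which should be handled by standard mollification-scale tricks.
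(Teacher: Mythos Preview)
Your proposal has a genuine gap at the very first step of the one-step lemma. The measure $\sigma$ produced by Theorem~\ref{thm:main_vector_valued} is built via the iterative Vitali argument of Proposition~\ref{prop:reduction_cubes_basis}: at each level $\ell$ one covers essentially all of a large ball by disjoint rescaled copies of $B_1$ and places mass on their boundaries, and the scales shrink as $\ell\to\infty$. Consequently $\supp\sigma$ is \emph{dense} in the support of $f$, so the sentence ``since $\supp\sigma$ has zero Lebesgue measure, it admits an open neighborhood $V$ with $\Lc^d(V)<\delta$'' is simply false in general (think of $\mathbb{Q}^d$ inside a cube). For the same reason your scaling heuristic $\Lc^d(V)\sim\eta\,\Hc{d-1}(\supp\sigma)$ is unusable: even though $|\sigma|(\R^d)\le C\norm{f}_{L^1}$, the quantity $\Hc{d-1}(\supp\sigma)$ is not controlled and is typically infinite, because the density of $\sigma$ with respect to $\Hc{d-1}$ decays along the iteration while the total $\Hc{d-1}$-measure of the supporting spheres blows up. So neither the choice of $V$ nor the $L^p$ bound on $\sigma*\rho_\eta$ can be obtained from the output of Theorem~\ref{thm:main_vector_valued} alone.

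The paper avoids this by never invoking Theorem~\ref{thm:main_vector_valued} for the Lusin statement. Instead it goes back to the elementary building block of Proposition~\ref{prop:hyp_sufficient_solving}: for a single vector $v\in\mathcal F$ one has an explicit $g\in C^1(\partial B_1,\R^m)$ with $\Ac(v\ind{B_1}+g\Hc{d-1}\rest\partial B_1)=0$, and from this one writes down by hand a smooth compactly supported $u\in C^\infty_c(B_1,\R^m)$ with $u\equiv v$ on $B_{1-\beta}$, $\Ac u=0$, and $\norm{u}_{L^p(B_1)}\le C\beta^{1/p-1}$ (this is Step~1 of Lemma~\ref{lmm:lusin}; the formula is $u(x)=\phi(|x|)v-|x|\phi'(|x|)g(x/|x|)$ for a radial cutoff $\phi$). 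The $L^p$ exponent $1/p-1$ comes directly from $\norm{\phi'}_\infty\lesssim\beta^{-1}$ on an annulus of width $\beta$, with no mollification of a singular measure involved. Step~2 then performs a \emph{single} Vitali covering by balls and places rescaled copies of $u$ in each, which gives the approximating $h$ with clean support and the correct $L^p$ bound. If you want to rescue your outline, the fix is precisely this: replace ``apply Theorem~\ref{thm:main_vector_valued} and mollify'' by ``use the explicit model profile $u$ on a single ball and tile''.
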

As above, it is worth remarking that $\Ac=\curl$ and any scalar operator of the form \eqref{eq:first_order_operator_type} with $\rank A\ge2$ satisfy Condition \ref{hyp:solving_2}.
The function $h$ in the statement of Theorem \ref{thm:Lusin_main} is built, in our proof, via an appropriate mollification of the measure $\sigma$ given by Theorem \ref{thm:main_vector_valued}. This choice is clearly not unique.

As previously stated, our result is related to the line of work initiated in \citep{dph_rindler_afree_original} of determining the structure of $\Ac$-free measures, that is measures $\mu$ satisfying $\Ac\mu=0$ in the sense of distributions.
While most works in this direction (for instance, \citep{dph_rindler_afree_original,arroyo2019dimensional})
focus on determining what singular measures are admissible as singular parts of $\Ac$-free measures, our results address rigidity and non-rigidity properties of the absolutely continuous part of $\Ac$-free measures.

A similar  question has been investigated in \citep{arroyo-rabasa_Lusin}: given any differential operator $\Bc$, one asks what functions are the absolutely continuous part of $\Bc u$ for some function $u$, thus dealing with measures in the \emph{image} of the operator $\Bc$, while in our work we study the non-rigidity of the absolutely continuous part of measures in the \emph{kernel} of an operator $\Ac$.
By the above mentioned connection between the range of the gradient operator and the kernel of $\curl$, \citep{arroyo-rabasa_Lusin} also generalizes \citep[Theorem 3]{alberti1991lusin}, although with different techniques.
We moreover mention that, while the approach of \citep{arroyo-rabasa_Lusin} is intrinsically limited to adding a $(d-1)$-rectifiable measure $\sigma$ to a function $f\in L^1$ in order to solve $f +\sigma = \Bc u$ for some $u$, our setting allows for more singular rectifiable measures.
Compare for instance \citep[Corollary 6]{arroyo-rabasa_Lusin}, which states that divergence is $(d-1)$-balanceable, with Theorem \ref{thm:main_scalar}, which proves that divergence is $k$-balanceable for every $k \in \{1,\dots,d-1\}$.
One should also see \citep{raita_potentials} for more connections between our point of view and that of \citep{arroyo-rabasa_Lusin}.

In \citep{AlbMar22}, a problem related to Question \ref{prb:alberti} is studied in the framework of \emph{flat} and rectifiable currents: in \citep[Proposition 3.3]{AlbMar22} the authors prove that every $k$-dimensional flat chain with finite mass has the same boundary of a $k$-rectifiable current.

Finally, we remark that Question \ref{prb:alberti} is worth-studying also if one simply requires $\sigma \perp \Lc^d$, without any dimensional constraint on $\sigma$.

The rest of the paper is structured as follows. In Section \ref{subsec:notations}, we collect most notation used thoughout the paper and we state and prove some simple results concerning first-order linear differential operators.
Sections \ref{sec:balance_scalar}, \ref{sec:vector_valued} and \ref{sec:lusin} are dedicated to the proofs of Theorems \ref{thm:main_scalar}, \ref{thm:main_vector_valued} and \ref{thm:Lusin_main}, respectively. Section \ref{sec:examples_and_obs} is dedicated to collecting some examples and observations on Condition \ref{hyp:solving_2}.

\section{Notation and reduction of the problem}\label{subsec:notations}

Here we collect in a brief and mostly schematic form the notation and definitions used throughout the paper.

\begin{longtable}{@{\extracolsep{\fill}}lp{0.75\textwidth}}
\multicolumn{2}{c}{\textbf{General notation}}\\[4pt]
$B_r(x)$                &   open ball of radius $r$ centered at $x$; when $x=0$, we omit its indication;\\
$\mathbb{S}^{d-1}$		& 	unit sphere in $\R^d$, namely the set $\{x \in \R^d \colon |x|=1\}$;\\
$\nu_E(x)$              &   outer unit normal to a set $E$ at $x \in \de E$;\\
$\dive_\Sigma X$		&	tangential divergence of $X \in C^1(\R^d,\R^d)$ with respect to $T_x \Sigma$, where $\Sigma \subset \R^d$ is a $k$-dimensional manifold of class $C^1$, namely $\dive_\Sigma X(x) = \sum_{j=1}^k \de_{\tau_j} X(x) \cdot \tau_j$, where $\{\tau_j\}_{j=1}^k$ is an orthonormal basis of $T_x \Sigma$.\\
$\mu \perp \sigma$ 		& 	mutually singular Borel measures $\mu,\sigma$, namely for which there exists $E \subseteq \R^d$ such that $|\mu|(E) = |\sigma|(\R^d \setminus E)=0$;\\
$\mu \ll \sigma$		&	the measure $\mu$ is absolutely continuous with respect to $\sigma$, i.e. $\mu(E)=0$ if $\sigma(E)=0$;\\
$\M(U,\R^m)$		     &	the space of  finite Radon measures on $U\subset\R^d$ with values in $\R^m$;\\
$\M^k (U,\R^m)$		& 	the space of $k$-rectifiable finite Radon measures on $U$ with values in $\R^m$, namely the set of measures $\mu \in \M(U,\R^m)$ for which there exist a $k$-rectifiable set $E \subseteq U$ and  $\theta \in L^1_{\Hc{k}}(E,\R^m)$ such that $\mu= \theta \Hc{k} \rest E$.\\[12pt]
\multicolumn{2}{c}{\textbf{}}\\[4pt]
\multicolumn{2}{c}{\textbf{Cubes }}\\[4pt]
$q(Q)$                  & 
center of the cube $Q$; \\
$\Pc_\ell(Q)$           &
dyadic decomposition of $Q=Q_r(x)$ at level $\ell$, namely the collection of $2^{d\ell}$ cubes with faces parallel to those of $Q$, with side length $2^{-\ell}r$, which cover $Q$; \\
$\face{i \pm}$			&
the face of $Q=(-1,1)^d$ where the exterior unit normal is $\nu_Q = \pm e_i$, namely $\{x\colon x_i=\pm1\mbox{ and }|x_j|<1\mbox{ for }j\neq i\}$\\
\multicolumn{2}{c}{\textbf{}}\\[4pt]
\multicolumn{2}{c}{\textbf{Operators}}\\[4pt]
$\op{d,m,n}$            &   space of first order linear differential operators $\Ac:C^\infty(\R^d,\R^m)\to C^\infty(\R^d,\R^n)$;\\
$(A^1,\dots,A^n)$       &   operator  $\Ac \in \op{d,m,n}$ represented by $A^1,\dots,A^n \in \R^{d \times m}$, namely $\Ac f(x) = \sum_{i,j}\big(A^1_{ij}\de_if_j(x),\dots,A^n_{ij}\de_i f_j(x)   \big)$;\\
$\Ac \cong \Bc$         &   operators $\Ac=(A^1,\dots,A^n)$ and $\Bc=(B^1,\dots,B^h)$ such that $\spn(A^1,\dots,A^n)= \spn(B^1,\dots,B^h)$.
\end{longtable}

We say that $\Ac \in \op{d,m,n}$ is \emph{scalar-valued} if $n=1$, otherwise that it is \emph{vector-valued}.
We moreover recall the following definition, which we already stated in the introduction.
\begin{definition}[$k$-balanceable operator]\label{def:balanceable}
	Given $k \in \N$, we say that a first order linear differential operator $\Ac\colon\cinf{d}{m}\to\cinf{d}{n}$ is \textit{$k$-balanceable} if there is $C>0$ such that, for every $f\in L^1(\R^d,\R^m)$, there is a $k$-rectifiable measure $\sigma$ in $\R^d$ with values in $\R^m$ such that
	\begin{equation}\label{eq:def_balanceable}
		\Ac(f+\sigma)=0\qquad\mbox{and}\qquad|\sigma|(\R^d)\le C\norm{f}_{L^1(\R^d)}.
	\end{equation}
\end{definition}

We now turn our attention to Question \ref{prb:alberti}.
The main result of this section is the fact that, in order to prove that an operator $\Ac$ is $k$-balanceable, it is sufficient to show that one can balance $v_j \ind{E_j}$, for some basis $\{v_j\}_j$ of $\R^m$ and some bounded ``almost closed" sets $E_j \subset \R^d$ of positive measure.
\begin{proposition}\label{prop:reduction_cubes_basis}
Let $\Ac\in \op{d,m,n}$ and let $\Fc = \{v_1,\dots,v_m\}$ be a basis of $\R^m$; let us assume that, for each $v \in \Fc$, there exists a bounded Borel set $E \subset \R^d$ and $\sigma \in\Mc^k(\R^d,\R^m)$ such that
\begin{equation}\label{eq:conditions_reduction_cubes}
\Lc^d(E)>0, \qquad \Lc^d(\bar E \setminus E)=0,
\qquad
		\Ac(v\ind{E} + \sigma) = 0.
\end{equation}
Then $\Ac$ is $k$-balanceable.
\end{proposition}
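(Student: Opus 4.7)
The strategy is to exploit linearity of $\Ac$ and its translation/dilation covariance, together with a Vitali-type covering argument, to upgrade the single-set balancing hypothesis to balancing of arbitrary $L^1$ functions.

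By expanding $f=\sum_{j=1}^m g_j v_j$ in the basis $\Fc$ with $g_j\in L^1(\R^d)$ scalar and $\sum_j\|g_j\|_{L^1}\le C_\Fc\|f\|_{L^1}$, linearity of $\Ac$ reduces the problem to balancing $g v$ for each fixed $v\in\Fc$ and scalar $g\in L^1(\R^d)$. Fix such a $v$ and let $E$, $\sigma\in\Mc^k$ be given by the hypothesis. Because every $\Ac\in\op{d,m,n}$ has constant coefficients, one can push $\sigma$ forward under affine maps: letting $\phi_{x_0,r}(x)=x_0+rx$, the rescaled pushforward $\sigma_{x_0,r}:=r^d(\phi_{x_0,r})_\#\sigma$ is $k$-rectifiable and a short computation via the adjoint of $\Ac$ gives
\begin{equation}
\Ac\pt{v\ind{x_0+rE}+\sigma_{x_0,r}}=0,\qquad|\sigma_{x_0,r}|(\R^d)=M_0\,\Lc^d(x_0+rE),
\end{equation}
with $M_0:=|\sigma|(\R^d)/\Lc^d(E)$ independent of $(x_0,r)$.

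Next comes the covering step: any bounded open $U\subset\R^d$ can be written $\Lc^d$-a.e.\ as a disjoint union $\bigsqcup_i(x_i+r_i\bar E)$ with $x_i+r_i\bar E\subset U$. Picking $R$ with $\bar E\subset B_R$, I would perform a Whitney-type decomposition of $U$ into dyadic cubes $\{Q_j\}$ of side $s_j$ and fit in each cube a closed scaled copy $x_j+r_j\bar E\subset \Int(Q_j)$ with $r_j=s_j/(4R)$. This covers a fixed fraction $\rho=\Lc^d(E)/(4R)^d$ of $\Lc^d(U)$, using $\Lc^d(\bar E)=\Lc^d(E)$ from the hypothesis; iterating the same procedure on the residual open set $\bigsqcup_j(\Int(Q_j)\setminus(x_j+r_j\bar E))$ yields a geometric exhaustion. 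Since $v\ind{E}=v\ind{\bar E}$ as distributions (again by $\Lc^d(\bar E\setminus E)=0$), each $\sigma_{x_0,r}$ from the previous step also balances $v\ind{x_0+r\bar E}$.

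Finally, any scalar $g\in L^1(\R^d)$ admits a decomposition $g=\sum_k c_k\ind{U_k}$ (convergent in $L^1$) with $U_k$ bounded open and $\sum_k|c_k|\Lc^d(U_k)\le C\|g\|_{L^1}$, via a layer-cake and outer-regularity argument. Combining this with the covering yields an absolutely summable family of $k$-rectifiable balancing measures whose sum $\tau_g$ is $k$-rectifiable (the cone $\Mc^k$ being closed in total variation), satisfies $|\tau_g|(\R^d)\le C\,M_0\|g\|_{L^1}$, and fulfills $\Ac(gv+\tau_g)=0$. Summing over $j$ produces the desired balancing measure for $f$. The main obstacle is the covering step: $\Lc^d(\bar E\setminus E)=0$ is strictly weaker than the Jordan condition $\Lc^d(\partial E)=0$ that underlies standard Vitali-type results for general sets, so some care is needed to verify that the Whitney-based iteration actually produces a geometric decrease of the residual despite the possibly irregular boundary of $E$.
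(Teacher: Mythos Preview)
Your proposal is correct and takes a genuinely different route from the paper. The paper runs a \emph{single} iterative scheme driven by Federer's Vitali covering theorem: at stage~$\ell$ it covers a large ball $\Lc^d$-a.e.\ by disjoint sets $x_i+r_iE$ centred at Lebesgue points of the residual $f-f_{\ell-1}$, with $r_i$ small enough that $\sum_i\bigl(f(x_i)-f_{\ell-1}(x_i)\bigr)\ind{x_i+r_iE}$ approximates the residual to within $2^{-\ell}\|f\|_{L^1}$, and balances each piece by the pushed-forward $\sigma$. Your scheme instead factors the problem into two nested iterations---a Whitney-based geometric exhaustion of a given open set by disjoint closed copies of $\bar E$, followed by a layer-cake/outer-regularity decomposition of $g$ into indicators of open sets---thereby avoiding the citation of Federer's general covering theorem at the price of one extra iterative layer (note that the layer-cake step itself requires an approximate-then-iterate argument to secure simultaneously the $L^1$-convergence and the bound $\sum_k|c_k|\Lc^d(U_k)\le C\|g\|_{L^1}$). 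The paper's route is shorter and more direct; yours is more self-contained.

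Your stated worry about $\Lc^d(\bar E\setminus E)=0$ being weaker than $\Lc^d(\partial E)=0$ is unfounded for your own construction: you place \emph{closed} copies $x_j+r_j\bar E$ inside the interiors of Whitney cubes, so the residual is open (the copies are locally finite in $U$), and the hypothesis gives exactly $\Lc^d(x_j+r_j\bar E)=r_j^d\Lc^d(E)$, which is all the fixed-fraction estimate needs. The paper deals with the same issue by replacing $E$ with $\bar E$ at the outset.
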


\begin{proof}[Proof]
	
	The proof is inspired to the one of \citep[Theorem 3]{alberti1991lusin}.
	We need to prove that, for any $f\in L^1(\R^d,\R^m)$, there exists $\mu\in\Mc^k(\R^d,\R^m)$ such that
	\begin{equation}\label{eq:reduction_goal}
		\Ac(f+\mu) = 0,
  \qquad
    |\mu|(\R^d)\leq C \norm{f}_{L^1(\R^d)},
	\end{equation}
 with $C$ independent of $f$. Arguing component-wise, we can fix $v \in \Fc$ and prove the validity of \eqref{eq:reduction_goal} for any $f \in L^1(\R^d,\spn(v))$.

 By $\Lc^d(\bar E \setminus E)=0$, it follows $\ind{\bar E}=\ind{E}$ $\Lc^d$-a.e., thus the measure $\sigma$ in the statement satisfies $\Ac(v\ind{\bar E} + \sigma)=0$ as well. Therefore we can assume that $E$ is closed. Moreover, up to replacing $\sigma$ with an appropriate translation and rescaling, the properties in the statement are valid for $\lambda v \ind{x+rE}$ for every $\lambda \in \R$, every $x \in \R^d$ and every $r>0$. In particolar, in the following we assume  that $0 \in E$, that $E\subset B_1$ and that $|v|=1$.

	Towards \eqref{eq:reduction_goal}, we aim at defining following sequences:
	\begin{itemize}
		\item a sequence of functions $\{f_\ell\}_{\ell\in\N}$
		such that
		\begin{equation}\label{eq:f_j}
			\normu{f-f_\ell} \leq 2^{-\ell} \normu{f};
		\end{equation}
		\item a sequence of $k$-rectifiable measures $\{\mu_\ell\}_{\ell\in\N} \subset \M(\R^d,\R^m)$ satisfying
		\begin{equation}\label{eq:w_j}
			\Ac \left (
			f_\ell
			+
			\sum_{i=1}^\ell\mu_i
			\right )
			=
			0
			\qquad
			\text{and}
			\qquad
			|\mu_\ell|(\R^d)
			\leq
			2^{-\ell}\tilde{C}\norm{f}_{L^1},
		\end{equation}
		where $\tilde{C}>0$ is a constant independent of $f$.
	\end{itemize}
	By 	\eqref{eq:f_j} and \eqref{eq:w_j}, 
	$\mu \coloneqq \sum_{i=1}^{+\infty} \mu_i$ satisfies \eqref{eq:reduction_goal}.
	
	\begin{description}
		\item[$\bullet$ Definition of $f_0,\mu_0$]
		
		We set $f_0 \equiv 0$ and $\mu_0 = 0$. Clearly $f_0,\mu_0$ satisfy \eqref{eq:f_j} and \eqref{eq:w_j}.
		
		\item[$\bullet$ Definition of $f_\ell,\mu_\ell$ satisfying \eqref{eq:f_j} and \eqref{eq:w_j}]
		
		Let us assume that $f_{\ell-1}$ and $\mu_1,\dots,\mu_{\ell-1}$ are defined and satisfy \eqref{eq:f_j} and \eqref{eq:w_j}.
Let us fix an open ball $B$ such that
\begin{equation}\label{eq:norm_ball_big}
    \norm{f-f_{\ell-1}}_{L^1(\R^d \setminus B)} \leq 2^{-\ell-1} \norm{f}_{L^1(\R^d)}.
\end{equation}
If $x \in \R^d$ is a Lebesgue point of $u\coloneqq f-f_{\ell-1}$, since $x+rE\subset B_r(x)$, it holds
  \begin{equation}
        \begin{split}
            \limsup_{r \to 0} \fint_{x+rE} |u(y)-u(x)| \dif \Lc^d(y)
        \leq
        \limsup_{r \to 0}
        \frac{\Lc^d(B_r(x))}{\Lc^d(x+rE)} \fint_{B_r(x)} |u(y)-u(x)| \dif \Lc^d(y)
        =
        0.
        \end{split}
    \end{equation}
    Thus, for every Lebesgue point $x \in B$ of $u$, there exists $\rho_x \in \big(0,\dist(x,\de B)\big)$ such that
    \begin{equation}\label{eq:mean_difference_small}
        \frac{1}{\Lc^d(x+rE)} \int_{x +rE} |u(y)-u(x)| \dif \Lc^d(y)
        <
        \frac{2^{-\ell-1} \norm{f}_{L^1(\R^d)}}{\Lc^d(B)}
        \qquad
        \forall r \in (0,\rho_x).
    \end{equation}
    Now let us call $\tilde B$ the set of Lebesgue points of $u$ in $B$ and let us consider the covering $\Fc$ of $\tilde B$ given by
    \begin{equation}
        \Fc \coloneqq \left \{ x+rE \colon x \in \tilde B, \,\, r \in (0,\rho_x) \right\}.
    \end{equation}
    By the version of Vitali covering theorem for arbitrary closed sets given in \citep[Theorem 2.8.17]{federer2014geometric} applied to the fine covering $\Fc$,
% \begin{equation}
%     V=\{(x,x + r E) \colon x + r E \in \Fc\},
%     \qquad
%     \delta=\diam,
%     \qquad
%     \oldphi = \Lc^d,\label{eq:definizioni_federer}
% \end{equation}
there exists a countable sub-family $\Fc_\ell \coloneqq\{F_i=x_i+r_i E\}_{i\in\N} \subset \Fc$ where the $F_i$ are mutually disjoint, such that
	\begin{equation}\label{eq:vitali_convering_measure}
		0= \Lc^d\left(\tilde B \setminus\bigcup_{i \in \N}F_i\right)
  =
  \Lc^d\left(B \setminus\bigcup_{i \in \N}F_i\right),
	\end{equation}
 where the last equality follows from the Lebesgue theorem.
 We now define
 \begin{equation*}
			\phi_\ell(x) = \sum_{i \in \N}\big(f(x_i) - f_{\ell-1}(x_i) \big)\ind{F_i}(x).
\end{equation*}
    By the definition of $\Fc$, it holds $\phi_\ell=0$ on $\R^d \setminus B$, thus
    \begin{equation}\label{eq:estimate_norm_f-f_ell}
     \begin{split}
            \norm{f - f_{\ell - 1} - \phi_\ell}_{L^1(\R^d)}
        & =
        \norm{f - f_{\ell - 1}}_{L^1(\R^d \setminus B)}
        +
        \norm{f - f_{\ell - 1} - \phi_\ell}_{L^1(B)}
        \\
        \overset{\eqref{eq:norm_ball_big}, \eqref{eq:vitali_convering_measure}}&{\leq}
        2^{-\ell-1} \norm{f}_{L^1(\R^d)} + \sum_{i \in \N} \int_{F_i} |u(y) - u(x_i)| \dif \Lc^d(y)
        \\
        \overset{\eqref{eq:mean_difference_small}}&{\leq}
        2^{-\ell-1} \norm{f}_{L^1(\R^d)} \left( 1 + \frac{1}{\Lc^d(B)}  \sum_{i \in \N} \Lc^d(F_i)   \right)
        \\
        & =
        2^{-\ell} \norm{f}_{L^1(\R^d)},
     \end{split}
    \end{equation}
    where the last equality is a consequence of  \eqref{eq:vitali_convering_measure} together with the fact that the sets $F_i$ are mutually disjoint and that $F_i \subseteq B$.
		Defining $f_\ell\coloneqq f_{\ell-1} +\phi_\ell$, by \eqref{eq:estimate_norm_f-f_ell} we obtain immediately $\normu{f - f_{\ell}} \leq 2^{-\ell}\normu{f}$, proving \eqref{eq:f_j}.
  
  In order to show the existence of $\mu_\ell$ satisfying \eqref{eq:w_j}, we first estimate
\begin{equation}\label{eq:estimate_norm_phi_ell}
    \begin{split}
        \norm{\phi_\ell}_{L^1(\R^d)}
    \leq
    \norm{f- f_{\ell-1} - \phi_\ell}_{L^1(\R^d)}+ \norm{f- f_{\ell-1}}_{L^1(\R^d)} 
    \leq
    2^{-\ell+2} \norm{f}_{L^1(\R^d)},
    \end{split}
\end{equation}
where the last inequality follows by \eqref{eq:estimate_norm_f-f_ell} and by inductive hypothesis.
		
		Fix now $i \in \N$ and let $\bar\phi_i \coloneqq f(x_i) - f_{\ell-1}(x_i)$.
		Let also $\tau(x)=x_i +r_ix$ be the affine map such that $\tau(E)=F_i$.
We define  
\begin{equation}
    \mu_{F_i} \coloneqq r_i^d (\bar \phi_i \cdot v)  (\tau)_{\#}\sigma
\end{equation}
which, by $\Ac(v\ind{E}+\sigma)=0$ and by linearity, satisfies
		\begin{equation}\label{eq:balance_small_cubes}
			\Ac\bigg(\bar\phi_{i}\ind{F_i}+\mu_{F_i}\bigg) = r_i^d (\bar \phi_i \cdot v)\,\Ac\bigg((\tau)_{\#}(v\ind{E}+\sigma)\bigg)=0.
		\end{equation}
		Moreover,
		\begin{equation}\label{eq:estimate_l_1_small_cubes}
			|\mu_{F_i}|(\R^d) = r_i^d|\bar \phi_i||\sigma|(\R^d) \le C_1 |\bar \phi_i|\Lc^d(F_i),
		\end{equation}
		where $C_1$ depends only on $\Ac$.

	Therefore, defining $\mu_\ell\coloneqq \sum_{i \in \N}\mu_{F_i}$
	and summing \eqref{eq:balance_small_cubes} and \eqref{eq:estimate_l_1_small_cubes} for $i \in \N$, we obtain
	\begin{gather}
		\Ac(\varphi_\ell + \mu_\ell)=0,\\[4pt]
		|\mu_\ell|(\R^d)
		\leq
		\sum_{i \in \N} |\mu_{F_i}|(\R^d)
		\le C_1 \sum_{i \in \N} \Lc^d(F_i)|\bar\phi_i|
		=
		C_1||\phi_\ell||_{L^1(\R^d)}
		\overset{\eqref{eq:estimate_norm_phi_ell}}{\leq}
		\tilde{C} 2^{-\ell}\norm{f}_{L^1(\R^d)},
	\end{gather}
	where $\tilde C=4C_1$ depends only on $\Ac$.
	Thus, since $f_\ell=f_{\ell-1}+\phi_\ell$ and, by inductive hypothesis, it holds $\Ac(f_{\ell-1} + \sum_{i=1}^{\ell-1} \mu_i)=0$, we have
	\begin{equation}
		\Ac\left(f_\ell + \sum_{i=1}^\ell \mu_i\right)
		=
		0.
	\end{equation}

	\item[$\bullet$ Convergence]
	By \eqref{eq:w_j}, the sequence of the partial sums of $\sum_{j \in \N} \mu_j$ is a Cauchy sequence in the strong topology of $\M(\R^d,\R^m)$.
	Hence it converges, in the same topology, to a measure $\mu \in \M(\R^d,\R^m)$ which is $k$-rectifiable (by strong convergence) and which satisfies, by \eqref{eq:w_j},
	\begin{equation}
		\Ac(f + \mu)=0,
		\qquad
		|\mu|(\R^d) \leq \tilde{C} \normu{f}.
	\end{equation}
	\end{description}

\end{proof}

\begin{remark}
    We underline the fact that the same proof works in case one merely requires $\sigma \perp \Lc^d$, again by strong convergence of the series $\sum_{j \in \N} \mu_j$ where each $\mu_j \perp \Lc^d$.
\end{remark}

We conclude this section with a simple remark, whose proof we omit, which we will use in the rest of the paper.
\begin{lemma}\label{lmm:Amu_dive}
Let $\Ac=(A^1,\dots,A^n) \in \op{d,m,n}$, where $A^k \in \R^{d \times m}$ and let $\sigma \in \M(\R^d,\R^m)$ be such that $\sigma= g |\sigma|$, where $g \colon \R^d \to \R^m$. Then
\begin{equation}
\Ac \sigma = \left ( \dive (A^1g |\sigma|), \dots, \dive (A^ng |\sigma|) \right ).
\end{equation}
\end{lemma}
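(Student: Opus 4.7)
The statement is purely a distributional identity, so the plan is to unfold both sides against an arbitrary test function $\phi \in C^\infty_c(\R^d,\R^n)$ and check that we obtain the same pairing.

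First I would compute the formal adjoint $\Ac^*\colon \cinf{d}{n}\to\cinf{d}{m}$ of $\Ac$. For smooth compactly supported $f$ and $\phi$, integration by parts yields
\begin{equation}
    \int_{\R^d} \phi \cdot \Ac f \dif x
    =
    \sum_{i,j,k} \int_{\R^d} \phi_k A^k_{ij} \de_i f_j \dif x
    =
    -\sum_j \int_{\R^d} f_j \Bigl( \sum_{i,k} A^k_{ij}\de_i \phi_k \Bigr) \dif x,
\end{equation}
so that $(\Ac^*\phi)_j = -\sum_{i,k} A^k_{ij} \de_i \phi_k$.

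Next, by definition of the distributional action of $\Ac$ on measures, and using $\sigma = g|\sigma|$,
\begin{equation}
    \scal{\Ac\sigma}{\phi}
    =
    \scal{\sigma}{\Ac^*\phi}
    =
    -\sum_{i,j,k} \int_{\R^d} A^k_{ij} g_j \de_i\phi_k \dif |\sigma|.
\end{equation}
On the other hand, testing the right-hand side of the claimed identity against $\phi$ gives, by the distributional definition of divergence,
\begin{equation}
    \sum_k \scal{\dive(A^k g|\sigma|)}{\phi_k}
    =
    -\sum_k \int_{\R^d} A^k g \cdot \nabla \phi_k \dif|\sigma|
    =
    -\sum_{i,j,k} \int_{\R^d} A^k_{ij} g_j \de_i\phi_k \dif|\sigma|.
\end{equation}
The two expressions coincide for every $\phi \in C^\infty_c(\R^d,\R^n)$, which yields the claimed equality of distributions.

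The only real care needed is in keeping the indices straight between the row/column structure of each $A^k$ and the component structure of $\phi$ and $g$; there is no analytical obstacle, since everything reduces to linearity and the standard integration-by-parts pairing that defines $\dive$ and $\Ac$ on measures.
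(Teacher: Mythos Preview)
Your argument is correct: the identity is a direct consequence of the distributional definitions, and your index computation checks out. The paper actually omits the proof of this lemma entirely (it is introduced as ``a simple remark, whose proof we omit''), so there is nothing to compare against; your write-up is exactly the routine verification one would expect.
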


\section{Balancing scalar operators}\label{sec:balance_scalar}

In this section we prove Theorem \ref{thm:main_scalar}. The proof is split in two parts.
Both parts rest on the fact that any scalar valued operator $\Ac$ can be written, up to choosing appropriate bases in $\R^d$ and $\R^m$, as a \enquote{lower dimensional} divergence, namely in the form $\dive_r f\coloneqq\de_1f_1+\dots\de_r f_r$, with $r=\rank A$. 
Geometrically, given $f\in\R^m$ and a cube $Q$ with a pair $\Gamma_{\pm}$ of faces perpendicular to $Af$, the equation $\Ac(f_1\ind{Q} + \sigma)=0$ means that
$A \sigma$ ``transports" the mass of $\Gamma_+$ into that of $\Gamma_-$.

In order to prove part \ref{item:main_positive} of Theorem \ref{thm:main_scalar}, given $k\ge d+1-r$, we first produce a $1$-rectifiable measure $\mu$ that transports the mass of a face of a $(d+1-k)$-dimensional cube to the mass of the opposite face. This is done in Lemma \ref{lmm:1_rect_dive}, see also Figure \ref{fig:ramified_divergence}. Then, the desired measure $\sigma$ is an appropriate rescaling of $\sigma = \mu\times\Hc{k-1}\rest(-1,1)^{k-1}$.

In order to prove part \ref{item:main_sharpness} of Theorem \ref{thm:main_scalar}, we show that any $\sigma$ satisfying
\begin{equation}
    \dive_r(f_1\ind Q+\sigma)=0
\end{equation}
%where $f_1 = (1,0,\dots,0)\in\R^m$ and $Q=(-1,1)^d$,
admits a disintegration $\lambda\otimes\sigma_{x''}$  where, $\lambda \not \perp \Hc{d-r}$ and, for $\lambda$-almost every $x'' \in \R^{d-r}$, $\dive_r\sigma_{x''}$ is a measure. By \citep{arroyo2019dimensional}, this yields that $\sigma_{x''} \not \perp \Hc{1}$. A coarea-type inequality (Lemma \ref{lmm:fubini_geq_hausdorff}) provides $|\sigma| \not \perp \Hc{d-r+1}$, as desired.

\subsection{Proof of part \ref{item:main_positive} of Theorem \ref{thm:main_scalar} }

In the proof of part \ref{item:main_positive} of Theorem \ref{thm:main_scalar}, we are going to use the following lemma, illustrated in Figure \ref{fig:ramified_divergence}. The result seems to be already well known; however, since we could not find an explicit reference in the literature, we write both the statement and its proof below.

\begin{lemma}\label{lmm:1_rect_dive}
Let $h \in \N$. There exists a finite $1$-rectifiable measure $\mu \in \M^1((-1,1)^{h+1},\R^{h+1})$ such that
\begin{equation}\label{eq:1_rect_dive_face}
\dive \mu = \Hc{h} \rest \face{1+} - \Hc{h} \rest \face{1-},
\end{equation}
where $\face{1\pm}= \{\pm 1\} \times (-1,1)^{h}$.
\end{lemma}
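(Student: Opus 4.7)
The statement is a classical ``branched transport'' lemma. The plan is to realize $\mu$ as the vector flow of an explicit dyadic ramified tree connecting the uniform $h$-dimensional masses on the two opposite faces. By the reflection symmetry $x_1\mapsto -x_1$, it is enough to construct a finite $1$-rectifiable vector measure $\mu^+$ supported in $[0,1]\times(-1,1)^h$ and satisfying
\begin{equation}
\dive\mu^+=\Hc{h}\rest\face{1+}-2^h\delta_0;
\end{equation}
the mirror image $\mu^-$ of $\mu^+$ through $\{x_1=0\}$ then satisfies $\dive\mu^-=2^h\delta_0-\Hc{h}\rest\face{1-}$, and $\mu:=\mu^++\mu^-$ does the job.

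For every $n\geq 0$ let $\{c_i^n\}_{i=1}^{2^{nh}}$ denote the centers of the dyadic decomposition of $(-1,1)^h$ at scale $2^{1-n}$, set $p_i^n:=(1-2^{-n},c_i^n)\in[0,1]\times(-1,1)^h$ and $m_n:=2^{h(1-n)}$. For $n\geq 1$ every vertex $p_i^n$ has a unique ``parent'' $\pi(p_i^n)$ at level $n-1$ (the vertex associated to the dyadic cube of scale $n-1$ containing $c_i^n$), and every parent has exactly $2^h$ children. Denoting by $S_i^n$ the segment joining $p_i^n$ to $\pi(p_i^n)$, oriented by the unit tangent $\tau_i^n$ pointing from child to parent, I would set
\begin{equation}
\mu^+:=\sum_{n=1}^{\infty}\sum_{i=1}^{2^{nh}}m_n\,\tau_i^n\,\Hc{1}\rest S_i^n.
\end{equation}
A direct computation gives $|S_i^n|=2^{-n}\sqrt{h+1}$, so level $n$ contributes at most $2^{nh}\cdot m_n\cdot 2^{-n}\sqrt{h+1}=2^{h-n}\sqrt{h+1}$ to the total variation, and the series converges in total variation norm; the limit is a vector measure concentrated on the countable union $\bigcup_{n,i}S_i^n$, hence belongs to $\M^1((-1,1)^{h+1},\R^{h+1})$.

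For the divergence, a direct integration by parts yields $\dive(m_n\tau_i^n\Hc{1}\rest S_i^n)=m_n(\delta_{p_i^n}-\delta_{\pi(p_i^n)})$. Using that every level-$(n-1)$ vertex has exactly $2^h$ children and that $2^hm_n=m_{n-1}$, the divergence of the partial sum up to level $N$ telescopes to $m_N\sum_{i}\delta_{p_i^N}-2^h\delta_0$. The measure $m_N\sum_i\delta_{p_i^N}$ is the Riemann sum approximation of $\Hc{h}\rest\face{1+}$ at scale $N$ (noting $1-2^{-N}\to 1$), and thus converges weakly-$*$ to it. The main (mild) technicality will be the passage to the limit inside $\dive$, which is however automatic from the strong total-variation convergence of the partial sums to $\mu^+$, and gives the required identity in $\Dc'((-1,1)^{h+1})$. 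The key design choices are the tree scale $1-2^{-n}$ and the mass $m_n=2^{h(1-n)}$, which simultaneously guarantee summable total variation and the correct telescoping.
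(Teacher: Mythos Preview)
Your proposal is correct and follows essentially the same dyadic branching-tree construction as the paper: the paper builds $\gamma$ with $\dive\gamma=\tfrac{1}{2^h}\Hc{h}\rest\Gamma_{1+}-\delta_0$ via the identical telescoping of partial sums over levels $\ell$ with vertices at $(1-2^{-\ell},q(P))$, proves strong convergence from the geometric decay of level masses, and then sets $\mu=2^h(\gamma-\gamma')$ with $\gamma'$ the reflection through the origin. One small caveat: the geometric mirror image of $\mu^+$ (reflecting both the support and the tangent vectors) actually has divergence $\Hc{h}\rest\Gamma_{1-}-2^h\delta_0$, so you need $\mu=\mu^+-\mu^-$ rather than $\mu^++\mu^-$ (equivalently, orient $\mu^-$ from parent to child), which is exactly the sign the paper takes.
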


\begin{figure}
    \centering
    \includegraphics{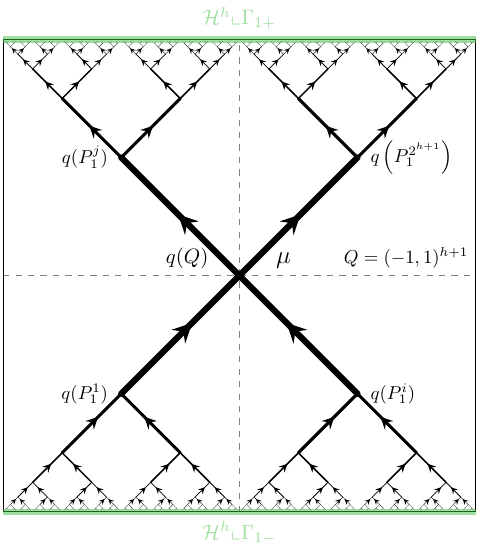}
    \caption{The measure $\mu$ given by Lemma \ref{lmm:1_rect_dive}, where the center $q(Q)$ of $Q$, the cubes in $\mathcal{P}_1(Q)$ and their centers are represented.}
    \label{fig:ramified_divergence}
\end{figure}

\begin{proof}
We start by fixing some notation.
Throughout the proof, for any two points $p,q\in\R^{h+1}$, we define the measure
\begin{equation*}
	\mu{[p,q]} = \frac{q-p}{|q-p|}\haus^1\rest[p,q],
\end{equation*}
where ${[p,q]}$ denotes the segment in $\R^{h+1}$ that joins $p$ and $q$. Notice that
\begin{equation}\label{eq:measure_transp}
	\dive{\mu[p,q]} = \delta_q-\delta_p\qquad\mbox{ and }\qquad\big|\mu[p,q]\big|(\R^{h+1}) = |q-p|.
\end{equation}
We claim there exists $\gamma\in\M^1((-1,1)^{h+1},\R^{h+1})$ such that
\begin{equation}\label{eq:claim_dive_gamma}
	\dive\gamma = \frac{1}{\Hc{h}(\Gamma_{1+})}\Hc{h} \rest \face{1+} - \delta_0.
\end{equation}
Given $\gamma$ as above, the measure $\mu$ satisfying the conclusion of the Lemma is just $\mu=\Hc{h}(\Gamma_{1+})\big(\gamma-\gamma'\big)$, where $\gamma'$ is the reflection of $\gamma$ across the origin, that is the push-forward of $\gamma$ through the map $\xi(x)\coloneqq -x$.

Let now $Q = (-1,1)^{h}\subset\R^{h}$. In order to define $\gamma$ as in the above claim, we inductively build $\{\gamma_\ell\}_{\ell\in\N}$ as follows.
\begin{itemize}
    \item We define
    \begin{equation}
        \gamma_1 \coloneqq \frac{1}{2^h}\sum_{P \in \Pc_1(Q)} \mu{\left[0,\left(\frac{1}{2}, q(P) \right) \right]},
    \end{equation}
    where
    $ \Pc_1(Q)$ is the dyadic decomposition of $Q$ at level $1$ defined in section \ref{subsec:notations}, $q(P)\in Q$ is the center of $P$ and the notation $(1/2,q(P)) = \{1/2\}\times \{q(P)\}\in(-1,1)^{h+1}$ was used.
    By \eqref{eq:measure_transp} it holds
    \begin{equation}
        \dive \gamma_1 = -\delta_0 + \frac{1}{2^h}\sum_{P \in \Pc_1(Q)} \delta_{\left(\frac{1}{2}, q(P) \right)}.
    \end{equation}
    \item Assume $\gamma_\ell$ is defined and satisfies
    \begin{equation}\label{eq:diramation_inductive}
		\dive\gamma_\ell = -\delta_0 + \frac{1}{2^{\ell h}}\sum_{P \in \Pc_\ell(Q)}\delta_{\left(1-2^{-\ell}, q(P) \right)}.
    \end{equation}
    We define $\gamma_{\ell+1}$ as
    \begin{equation}
        \gamma_{\ell+1} = \gamma_\ell + \frac{1}{2^{(\ell+1)h}} \sum_{P \in \Pc_\ell(Q)} \sum_{T \in \Pc_1(P)} \mu \left[ \left(1-2^{-\ell}, q(P) \right), \left(1-2^{-\ell-1}, q(T)\right) \right].
    \end{equation}
    It then holds
    \begin{equation}
    \begin{split}
    \dive \gamma_{\ell+1}
    \overset{\eqref{eq:measure_transp}}&{=}
    \dive \gamma_{\ell} + \frac{1}{2^{(\ell+1)h}} \sum_{P \in \Pc_\ell(Q)} \sum_{T \in \Pc_1(P)} \Big( \delta_{\left(1-2^{-\ell-1}, q(T)\right)} - \delta_{\left(1-2^{-\ell}, q(P) \right)} \Big)
    \\
    \overset{\eqref{eq:diramation_inductive}}&{=}
    -\delta_0 +\frac{2^h}{2^{(\ell+1)h}}\sum_{P \in \Pc_\ell(Q)}\delta_{\left(1-2^{-\ell}, q(P) \right)}
    -\frac{2^h}{2^{(\ell+1)h}} \sum_{P \in \Pc_\ell(Q)}\delta_{\left(1-2^{-\ell}, q(P) \right)} 
    \\
    & \qquad + \frac{1}{2^{(\ell+1)h}} \sum_{T \in \Pc_{\ell+1}(Q)}  \delta_{\left(1-2^{-\ell-1}, q(T)\right)}
     \\
    & =
     -\delta_0 + \frac{1}{2^{(\ell+1)h}} \sum_{T \in \Pc_{\ell+1}(Q)}  \delta_{\left(1-2^{-\ell-1}, q(T)\right)}.
    \end{split}
    \end{equation}
\end{itemize}
Notice that, by definition of $\gamma_{\ell+1}$, it holds
\begin{align*}
	|\gamma_{\ell+1}-\gamma_\ell|(\R^{h+1})
    \overset{\eqref{eq:measure_transp}}&{=}
    \frac{1}{2^{(\ell+1)h}}  \sum_{P\in\Pc_\ell(Q)} \sum_{T \in \Pc_1(P)} \left| \left(1-2^{-\ell}, q(P) \right) - \left(1-2^{-\ell-1}, q(T)\right)  \right|
    \\
    & =
    \frac{\sqrt{h+1}}{2^{\ell+1}}.
\end{align*}
Therefore $\{\gamma_\ell\}_{\ell\in\N}$ is a Cauchy sequence in the strong topology of $\M((-1,1)^{h+1},\R^{h+1})$, hence it strongly converges to some $\gamma\in\M^1((-1,1)^{h+1},\R^{h+1})$. Moreover, since
\begin{equation*}
	\frac{1}{2^{(\ell+1)h}} \sum_{T \in \Pc_{\ell+1}(Q)}  \delta_{\left(1-2^{-\ell-1}, q(T)\right)}
        \weakly  \frac{1}{\Hc{h}( \face{1+})}\Hc{h} \rest \face{1+},
\end{equation*}
\eqref{eq:claim_dive_gamma} holds true.
\end{proof}

We can now pass to the proof of part \ref{item:main_positive} of Theorem \ref{thm:main_scalar}.

\begin{proof}[Proof of part \ref{item:main_positive} of Theorem \ref{thm:main_scalar}]
	Let $\Ac=(A)$, and let $r\coloneqq\rank A\ge1$.
	The case $r=1$ (hence $k=d$) is trivial, since one can choose $\sigma= -f\Lc^d$. Therefore, we assume $r\ge2$.
	Without loss of generality, we may also assume that
	\begin{equation}\label{eq:A=I_rmd}
		A = \Id_{r,d,m}\coloneqq
		\begin{pmatrix}
			\Id_r	& 	0\\
			0		&	0
		\end{pmatrix}\in\R^{d\times m}.
	\end{equation}
	Indeed, one can always find (for instance, using the singular value decomposition of $A$) two invertible matrices $U\in\R^{d\times d}$ and $V\in\R^{m\times m}$ such that
 \begin{equation}
     A = U\Id_{r,d,m}V^T.
 \end{equation}
 Then, letting $\tilde f(x) = (\det U)^{-1}V^Tf(Ux)$, the following implication holds:
 \begin{equation}
     (\Id_{r,d,m})(\tilde f+\tilde\sigma)=0
     \quad
     \implies
     \quad
     \Ac(f+\sigma)=0,
 \end{equation}
 where
 \begin{equation}
     \tilde{\sigma} \coloneqq (V^{-1})^T(u_{\#}\sigma)
     \quad \text{and} \quad
     u(x) \coloneqq Ux.
 \end{equation}
	In the following, we denote by $\{f_1,\dots,f_m\}$ the standard orthonormal basis of $\R^{m}$ and by $\{e_1,\dots,e_d\}$ the standard orthonormal basis of $\R^{d}$. We also let $Q \coloneqq (-1,1)^d$ be the cube in $\R^d$ with faces parallel to $e_1,\dots,e_d$.
	By Proposition \ref{prop:reduction_cubes_basis}, it is sufficient to prove that, for any $j \in \{1,\dots,m\}$, there exists $\sigma \in \M^k(\R^d,\R^m)$ such that
\begin{equation}\label{eq:existence_sigma}
\Ac(f_j \ind{Q} + \sigma)=0.
\end{equation}

\begin{figure}
    \centering
    \includegraphics{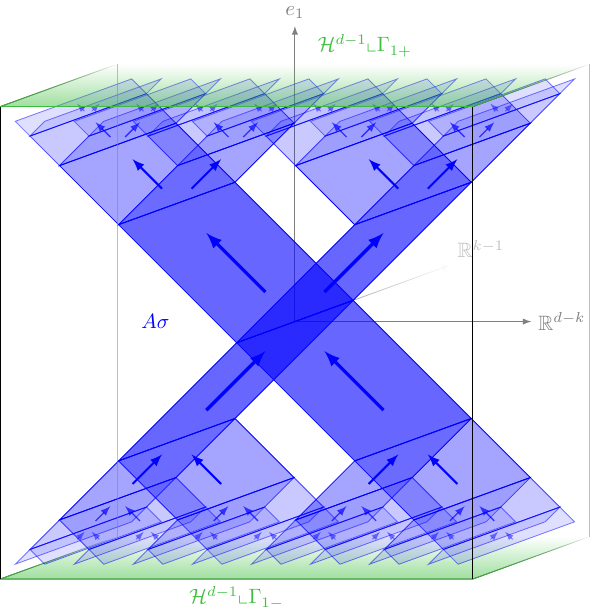}
    \caption{The measure $A \sigma$.}
    \label{fig:sigma_rect}
\end{figure}

It suffices to consider the case $j=1$, since the cases $j\in\{2,\dots,r\}$ are analogous and the cases $j\in\{r+1,\dots,m\}$ are trivial, as $\sigma=0$ satisfies \eqref{eq:existence_sigma}.
We remark that, since $D\ind{Q} = \nu_{\de Q}\Hc{d-1}\rest\de Q$, in the case $j=1$ \eqref{eq:existence_sigma} is equivalent to 
\begin{equation}\label{eq:sigma_balances}
	\Ac\sigma = \Hc{d-1}\rest \face{1+} - \Hc{d-1}\rest \face{1-},
\end{equation}
where $\Gamma_{1+}$ and $\Gamma_{1-}$ are the two faces of $Q$ whose exterior unit normal vectors are $e_1$ and $-e_1$ respectively, as defined in Section \ref{subsec:notations}.

In the rest of the proof, we use the following notation:
\begin{equation}
x=(x',x'') \in Q,
\qquad
\text{where}
\quad
x' \in Q' \coloneqq (-1,1)^{d+1-k},
\quad
x'' \in Q'' \coloneqq (-1,1)^{k-1}.
\end{equation}

We build $\sigma$ as follows.
By Lemma \ref{lmm:1_rect_dive}, there exists a $1$-rectifiable measure $\mu'=\eta'|\mu'|\in \M^1(Q',\R^{d+1-k})$ satisfying
\begin{equation}\label{eq:dive_mu'}
\dive \mu' = \Hc{d-k}\rest \face{1+}' - \Hc{d-k}\rest \face{1-}',
\end{equation}
where $\face{1+}',\face{1-}'$ are the faces of $Q'$ whose exterior unit normal vectors are $e_1$ and $-e_1$ respectively.
Notice that by assumptions $m \geq r \geq d+1-k$, thus the vector
\begin{equation*}
	\eta(x) \coloneqq \big(\eta'(x'),0_{m-(d+1-k)}\big)\in\R^m
\end{equation*}
is well-defined at $|\mu'|\times\Hc{k-1}$-almost every $x$. We may therefore define the measure $\sigma$ by
\begin{equation}\label{eq:def_mu_divergence_main}
%\iprod{\sigma}{\phi}=\int_{Q''}\left(\int\phi(x',x'')\cdot\eta(x',x'')\dif|\mu'|(x')\right)\dif\Hc{k-1}(x'')\quad\mbox{for all }\phi\in \cinf{d}{m},
\sigma \coloneqq \eta(x) \left( |\mu'| \times \Hc{k-1}\rest Q'' \right).
\end{equation}

Notice that $\sigma \in \M^k(Q,\R^m)$ by construction, since it is the product of the $1$-rectifiable measure $\mu'$ on $Q'$ with the $(k-1)$-rectifiable measure $\Hc{k-1}\rest Q''$. Moreover, by Lemma \ref{lmm:1_rect_dive}, $|\mu'|(Q')$ depends only on $d-k$, hence there exists $C_2>0$ depending only on $d$ such that
\begin{equation}\label{eq:estimate_mu_d}
|\sigma|(Q) \leq C_2
\end{equation}
for any choice of $k$. Since $A\eta=(\eta'(x),0_{k-1})$, it holds
\begin{equation}\label{eq:A_sigma}
    A \sigma =
    (\eta'(x),0_{k-1})\left( |\mu'| \times \Hc{k-1}\rest Q'' \right)
\end{equation}
as represented in Figure \ref{fig:sigma_rect}.

We claim that $\sigma$ satisfies \eqref{eq:sigma_balances}, which would conclude the proof.
In order to do so, let us consider a test function $\phi \in \cinf{d}{}$. We have
\begin{align}\label{eq:dive_mu_main}
\iprod{\Ac\sigma}{\phi}
&=
- \iprod{\sigma}{A^T\nabla \phi}
\\
\overset{\eqref{eq:def_mu_divergence_main}}&{=}
- \int_{Q''} \left ( \int_{Q'} A\eta(x',x'') \cdot \nabla\phi(x',x'') \dif |\mu'|(x') \right ) \dif \Hc{k-1}(x'')
\\
\overset{\eqref{eq:A_sigma}}&{=}
- \int_{Q''} \left ( \int_{Q'} \eta'(x') \cdot \nabla' \phi(x',x'') \dif |\mu'|(x') \right ) \dif \Hc{k-1}(x'')
\\
&= 
\int_{Q''} \iprod{\dive' \mu'}{\phi(\cdot,x'')} \dif \Hc{k-1}(x'')
\\
\overset{\eqref{eq:dive_mu'}}&{=}
\int_{Q''} \left ( \int_{\face{1+}'} \phi(x',x'') \dif \Hc{d-k}(x') - \int_{\face{1-}'} \phi(x',x'') \dif \Hc{d-k}(x')  \right ) \dif \Hc{k-1}(x'')
\\
&= 
\int_{\face{1+}} \phi(x) \dif \Hc{d-1}(x)
- \int_{\face{1-}} \phi(x) \dif \Hc{d-1}(x),
\end{align}
as claimed.
\end{proof}

\subsection{Proof of part \ref{item:main_sharpness} of Theorem \ref{thm:main_scalar} }

For part \ref{item:main_sharpness} of Theorem \ref{thm:main_scalar}, we are going to need two auxiliary results.
The first one may be thought of
as a ``coarea inequality" for Hausdorff measures. The same proposition, in the case $d=2$, is stated and proved in \citep[Proposition 7.9]{falconer2004fractal}. It is easily seen that the same argument shows the following generalization. 

\begin{lemma}\label{lmm:fubini_geq_hausdorff}
Let $k \in \{1,\dots,d\}$, let $s \in [k,d]$ and let $F \subseteq \R^d$ be a Borel set. Then
\begin{equation}
\Hc{s}(F)
\geq
\int_{\R^k} \Hc{s-k}(F \cap T_{x''}) \dif \Hc{k}(x''),
\end{equation}
where $T_{x''} \coloneqq \{x=(x',x'') \in \R^d \colon x' \in \R^{d-k}\}$ for every $x'' \in \R^k$.
\end{lemma}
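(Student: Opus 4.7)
The plan is to argue directly from the definition of Hausdorff measure via covers. Recall that
\begin{equation}
\Hc{s}(F) = \lim_{\delta \to 0^+} \Hc{s}_\delta(F),
\qquad
\Hc{s}_\delta(F) = \inf\bigg\{\sum_i (\diam U_i)^s \colon F \subset \bigcup_i U_i,\ \diam U_i \leq \delta\bigg\},
\end{equation}
(up to the normalization chosen for $\Hc{s}$). Given a $\delta$-cover $\{U_i\}_{i \in \N}$ of $F$, for each $x'' \in \R^k$ the sets $\{U_i \cap T_{x''}\}_i$ form a $\delta$-cover of $F \cap T_{x''}$ inside the affine subspace $T_{x''} \cong \R^{d-k}$. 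Hence by definition
\begin{equation}\label{eq:sliced_cover_bound}
\Hc{s-k}_\delta(F \cap T_{x''}) \leq \sum_i (\diam(U_i \cap T_{x''}))^{s-k}.
\end{equation}
This is the key step that transfers a global cover into slicewise covers; note that $s \geq k$ is used here to ensure $s-k \geq 0$.

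The main quantitative ingredient is a slicewise estimate for a single cover element $U_i$: denoting by $\pi\colon \R^d \to \R^k$ the projection onto the $x''$ coordinates,
\begin{equation}\label{eq:slicewise_single}
\int_{\R^k} (\diam(U_i \cap T_{x''}))^{s-k} \dif \Hc{k}(x'') \leq (\diam U_i)^{s-k}\,\Hc{k}(\pi(U_i)) \leq C\,(\diam U_i)^{s},
\end{equation}
because $\diam(U_i \cap T_{x''}) \leq \diam U_i$ whenever the slice is nonempty, and because $\pi(U_i) \subset \R^k$ has diameter at most $\diam U_i$, so its $\Hc{k}$-measure is controlled by $(\diam U_i)^k$ up to an absolute constant (ultimately via the isodiametric inequality in $\R^k$). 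With the standard spherical normalization of Hausdorff measures, one can arrange $C=1$; this is the only point where the normalization matters.

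Putting \eqref{eq:sliced_cover_bound} and \eqref{eq:slicewise_single} together and integrating termwise,
\begin{equation}
\int_{\R^k} \Hc{s-k}_\delta(F \cap T_{x''}) \dif \Hc{k}(x'') \leq \sum_i \int_{\R^k} (\diam(U_i \cap T_{x''}))^{s-k} \dif \Hc{k}(x'') \leq \sum_i (\diam U_i)^s,
\end{equation}
and taking the infimum over $\delta$-covers yields the same bound with $\Hc{s}_\delta(F)$ on the right. Finally, letting $\delta \to 0^+$ and invoking Fatou's lemma on the left (using the monotonicity $\Hc{s-k}_\delta \uparrow \Hc{s-k}$) we obtain the statement. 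The one delicate point to record is the isodiametric estimate for $\Hc{k}(\pi(U_i))$ in \eqref{eq:slicewise_single}, which is what pins down the constant $1$ in the stated inequality; for the application to Theorem~\ref{thm:main_scalar} \ref{item:main_sharpness} any finite constant would in fact suffice, since only the dimensional threshold is used.
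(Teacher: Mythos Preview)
Your proof is correct and is precisely the covering argument from \cite[Proposition~7.9]{falconer2004fractal} that the paper cites (without reproducing) for this lemma. One minor terminological note: with the unnormalized definition $\Hc{s}_\delta=\inf\sum_i(\diam U_i)^s$ that you actually wrote down, the constant $C=1$ follows directly from the isodiametric inequality, whereas the normalization making $\Hc{k}=\Lc^k$ on $\R^k$ would give the constant $\omega_{s-k}\omega_k/\omega_s$, which can exceed~$1$---as you correctly observe, this is immaterial for the application in Theorem~\ref{thm:main_scalar}\ref{item:main_sharpness}.
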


The second auxiliary result we are going to need is the following, taken from \cite{arroyo2019dimensional}.
\begin{lemma}\label{lemma:aux_divergence}
    Let $\sigma\in\M(\R^{d},\R^d)$ be such that $\dive(\sigma)=0$. Then
    \begin{equation}
        \sigma\ll\haus^{1}.
    \end{equation}
\end{lemma}
% \begin{proof}
%     In \cite[Corollary 1.4]{arroyo2019dimensional}, it was proved that, if the $\ell$-wave cone $\Lambda^\ell_\Ac$ of a linear differential operator $\Ac$ satisfies $\Lambda^\ell_\Ac=\{0\}$, then any Radon measure $\sigma$ such that $\Ac\sigma=0$ satisfies $\sigma\ll \haus^\ell$.
%     It is easy to check (some further observations on this matter are given later in Subsection \ref{subsec:wavecone}) that, if $\Ac=(A)\in\op{d,m,1}$, then
%     \begin{equation}
%         \Lambda^1_\Ac = \{x\in\R^m\colon Ax\cdot\xi=0\mbox{ for all }\xi\in\mathbb{S}^{d-1}\}.
%     \end{equation}
%     In particular, since $\dive = (\Id)$, it holds $\Lambda^1_{\dive}=\{0\}$, hence the desired result.
% \end{proof}
\begin{proof}
    We recall the definition of $\ell$-dimensional wave cone $\Lambda^\ell_\Ac$ of a linear differential operator $\Ac \in \op{d,m,n}$, given in \cite{arroyo2019dimensional}:
    \begin{equation}
        \Lambda^\ell_\Ac
        =
        \bigcap_{\pi \in \operatorname{Gr}(\ell,d)}
        \,\,
        \bigcup_{\xi \in \pi \setminus \{0\}} \ker \mathbb{A}(\xi),
    \end{equation}
    where $\operatorname{Gr}(\ell,d)$ is the Grassmaniann of $\ell$-dimensional subspaces of $\mathbb{R}^d$ and $\mathbb{A}$ is the Fourier symbol of the operator $\mathcal{A}$, which we define later in \eqref{eq:Fourier_symbol}.
    
    In \cite[Corollary 1.4]{arroyo2019dimensional}, it was proved that, if the $\ell$-wave cone $\Lambda^\ell_\Ac$ of a linear differential operator $\Ac$ satisfies $\Lambda^\ell_\Ac=\{0\}$, then any Radon measure $\sigma$ such that $\Ac\sigma=0$ satisfies $\sigma\ll \haus^\ell$.
    It is easy to check that, if $\Ac=(A)\in\op{d,m,1}$, then (some further observations on this matter are given later in Subsection \ref{subsec:wavecone})
    \begin{equation}
        \Lambda^1_\Ac = \{x\in\R^m\colon Ax\cdot\xi=0\mbox{ for all }\xi\in\mathbb{S}^{d-1}\}.
    \end{equation}
    In particular, since $\dive = (\Id)$, it holds $\Lambda^1_{\dive}=\{0\}$, hence the desired result.
\end{proof}

\begin{proof}[Proof of part \ref{item:main_sharpness} of Theorem \ref{thm:main_scalar}]
By the same argument used in the proof of part \ref{item:main_positive} of Theorem \ref{thm:main_scalar}, we may assume that
\begin{equation}
A =
\begin{pmatrix}
\Id_r	& 	0\\
0		&	0
\end{pmatrix}\in\R^{d\times m},
\end{equation}
where $\Id_r$ denotes the $r$-dimensional identity matrix. We also let $\{f_1,\dots,f_m\}$ and $\{e_1,\dots,e_d\}$ denote the canonical orthonormal basis of $\R^{m}$ and $\R^d$, respectively. We are going to prove that we can take $f=f_1 \ind{Q}$.

Let us fix $\sigma= \eta |\sigma| \in \M(\R^d,\R^m)$ satisfying
\begin{equation}
\Ac(f_1 \ind{Q} + \sigma)=0,
\end{equation}
where $Q=(-1,1)^d$.
We are going to show that $|\sigma| \not \perp \Hc{d+1-r}$, that is
\begin{equation}
\Hc{d+1-r}(E) \neq 0
\qquad
\mbox{for every } E \subseteq \R^d \colon |\sigma|(E^c)=0.
\end{equation}

As in the first part of the proof, by Lemma \ref{lmm:Amu_dive} the equation above means that
\begin{equation}\label{eq:dive_sigma_hd-1}
\dive(A \sigma)=
\Hc{d-1}\rest \face{1+} - \Hc{d-1}\rest \face{1-}.
\end{equation}
\eqref{eq:dive_sigma_hd-1} in particular implies $\sigma \neq 0$. In the rest of the proof, we use the notations
\begin{equation}
\begin{gathered}
x =(x',x'') \in \R^d
\qquad
\text{where}
\qquad
x' \in \R^r \quad \text{and} \quad x'' \in \R^{d-r},
\\[4pt]
\face{1\pm}' \coloneqq \{\pm1\} \times (-1,1)^{r-1},
\qquad
Q'' \coloneqq (-1,1)^{d-r}
\end{gathered}
\end{equation}
so that $\Gamma_{1\pm} = \Gamma'_{1\pm}\times Q''$.
By disintegrating $|\sigma|$ with respect to $x''$ (see, for instance, \citep[Theorem 5.3.1]{AmbrosioGigliSavare}) we can write 
\begin{equation}
\int \phi(x) \dif |\sigma|(x)
=\int_{\R^{d-r}} \left ( \int_{\R^r} \phi(x',x'') \cdot \big(\eta'(x',x''),\eta''(x',x'') \big) \dif \sigma_{x''}(x') \right ) \dif \lambda(x'')
\qquad
\forall
\phi \in C_c(\R^d,\R^d),
%\sigma = \big(\eta'(x',x''),\eta''(x',x'') \big) \lambda \otimes \sigma_{x''},
\end{equation}
where
\begin{equation}
\eta'(x',x'') \in \R^r,
\qquad
\eta''(x',x'') \in \R^{d-r},
\qquad
\lambda \in \M(\R^{d-r},[0,+\infty]),
\qquad
\sigma_{x''}(\R^r)=1
\quad
\text{for $\lambda$-a.e. $x'' \in \R^{d-r}$}.
\end{equation}
We now test \eqref{eq:dive_sigma_hd-1} with $\zeta(x)\coloneqq \phi(x')\psi(x'')$, where $\phi \in \ccinf{r}{}$ and $\psi \in \ccinf{d-r}{}$: taking into account that
\begin{equation}
\nabla \zeta(x)
=
\big( \nabla' \phi,0''\big)\psi(x'') + \big(0',\nabla'' \psi \big) \phi(x'),
\end{equation}
we obtain
\begin{equation}\label{eq:test_part_2_main}
\begin{split}
\int_{Q''} \psi(x'') \left (\int_{\face{1+}'} \phi(x') \dif \Hc{r-1} - \int_{\face{1-}'} \phi(x') \dif \Hc{r-1}  \right )& \dif \Hc{d-r}(x'')
=
\int_{\face{1+}} \phi \psi \dif \Hc{d-1}
-
\int_{\face{1-}} \phi \psi \dif \Hc{d-1}
\\
= &
\int A\eta \cdot \nabla \zeta \dif |\sigma|
\\
= &
\int_{\R^{d-r}} \psi(x'') \left ( \int_{\R^r}  \eta'(x',x'')\cdot \nabla' \phi(x') \dif \sigma_{x''}(x') \right ) \dif \lambda(x'').
\end{split}
\end{equation}
Choosing ${\bar{\phi}}$ such that ${\bar{\phi}}_{|\face{+1}'}\equiv 2^{-r+1}$ and ${\bar{\phi}}_{|\face{1-}'}\equiv 0$, the arbitrariness of $\psi$ in \eqref{eq:test_part_2_main} implies
\begin{equation}\label{eq:Hc_d-r_ll_lambda}
\Hc{d-r} \rest Q''
=
\beta \lambda,
\end{equation}
where $\beta(x'')\coloneqq \int_{\R^r}  \eta'(x',x'')\cdot \nabla' \bar{\phi}(x') \dif \sigma_{x''}(x')$. \eqref{eq:Hc_d-r_ll_lambda} yields
\begin{equation}\label{eq:beta_positive}
\beta(x'') > 0
\qquad
\text{for $\Hc{d-r}$-a.e. $x'' \in Q''$}.
\end{equation}
Next, inserting \eqref{eq:Hc_d-r_ll_lambda} in \eqref{eq:test_part_2_main}
and using the arbitrariness of $\psi$ again we deduce that, for any choice of $\phi$, it holds
\begin{equation}
\beta(x'') \left (\int_{\face{1+}'} \phi(x') \dif \Hc{r-1} - \int_{\face{1-}'} \phi(x') \dif \Hc{r-1}
\right )
=
\int_{\R^r}  \eta'(x',x'')\cdot \nabla \phi(x') \dif \sigma_{x''}(x')
\qquad
\text{for  $\lambda$-a.e. $x'' \in \R^{d-r}$}.
\end{equation}
The set of full $\lambda$-measure where the above equality holds may in principle depend on the choice of $\phi$. Nevertheless, since $C_c^1(\R^d,\R)$ is separable, fixing a countable dense subset $X$, there exists a set of full $\lambda$-measure where the equality is true for any $\phi \in X$. By density of $X$, on the same set the equality holds for every $\phi \in C_c^1(\R^d,\R)$, hence
\begin{equation}\label{eq:dive_r_sigma}
\dive' \big( \eta'\sigma_{x''} \big)
=
\beta(x'') \left ( \Hc{r-1} \rest \face{1+}' - \Hc{r-1} \rest \face{1-}' \right )
\qquad
\text{for $\lambda$-a.e. $x'' \in \R^{d-r}$}.
\end{equation}
We can read this relation as
\begin{equation}
\dive' \left (
\eta'(\cdot,x'')\sigma_{x''} + {\beta(x'')} e_1'\ind{Q'}
\right )
=
0
\qquad
\text{for $\lambda$-a.e. $x'' \in \R^{d-r}$},
\end{equation}
where $e_1'=(1,0,\dots,0) \in \R^r$ and $Q'=(-1,1)^r \subset \R^r$.
% Using notations of \citep{arroyo2019dimensional}, since the $1$-dimensional wave cone $\Lambda_{\dive}^1$ of the divergence operator satisfies $\Lambda_{\dive}^1=\{0\}$, by \citep[Corollary 1.4]{arroyo2019dimensional} it holds
By Lemma \ref{lemma:aux_divergence}, it holds
\begin{equation}\label{eq:sigma_x''_ll_H1}
\sigma_{x''} \ll \Hc{1}
\qquad
\text{for $\lambda$-a.e. $x'' \in \R^{d-r}$}.
\end{equation}

Let us now choose $E \subseteq \R^d$ such that $|\sigma|(\R^d \setminus E)=0$; we are going to show that $\Hc{d-r+1}(E) \neq 0$, proving that $|\sigma| \not \perp \Hc{d-r+1}$.

Let us fix any $F'' \subseteq Q''$ with $\lambda(F'')>0$ and call $F \coloneqq \R^r \times F''$. Since $\lambda(F'')>0$, by \eqref{eq:beta_positive} we have
\begin{equation}\label{eq:Hc_E''_positive}
\Hc{d-r}(F'')>0
\end{equation}
as well.
Letting $T_{x''} \coloneqq \{x=(x',x'') \in \R^d \colon x' \in \R^{r}\}$, it holds
\begin{equation}
0
<
\lambda(F'')
=
|\sigma|(F)
=
|\sigma|(E \cap F)
=
\int_{F''}  \sigma_{x''}(E \cap T_{x''}) \dif \lambda(x'')
\leq
\lambda(F''),
\end{equation}
where the first equality and the last inequality follow by the fact that $\sigma_{x''}$ are probability measures and the second equality is a consequence of $|\sigma|(\R^d \setminus E)=0$. The above relation yields
\begin{equation}
\sigma_{x''}(E \cap T_{x''})=1
\qquad
\text{for $\lambda$-a.e. $x'' \in F''$}
\end{equation}
which, by \eqref{eq:sigma_x''_ll_H1}, produces
\begin{equation}
\Hc{1}(E \cap T_{x''})>0
\qquad
\text{for $\lambda$-a.e. $x'' \in F''$}.
\end{equation}
Hence, by \eqref{eq:Hc_d-r_ll_lambda} and \eqref{eq:beta_positive}, we infer
\begin{equation}\label{eq:H1_A_positive}
\Hc{1}(E \cap T_{x''})>0
\qquad
\text{for $\Hc{d-r}$-a.e. $x'' \in F''$}.
\end{equation}
Thus we have
\begin{equation}
\Hc{d-r+1}(E)
\geq
\Hc{d-r+1}(E \cap F)
\geq
\int_{F''} \Hc{1}(E \cap T_{x''}) \dif \Hc{d-r}(x'')
>
0,
\end{equation}
where the second inequality follows by Lemma \ref{lmm:fubini_geq_hausdorff} with $k=d-r$ and $s=d-r+1$, while the last inequality follows by \eqref{eq:Hc_E''_positive} and \eqref{eq:H1_A_positive}.
\end{proof}

We conclude this section with an example of an $\Ac$-free measure which is not absolutely continuous with respect to $\Hc{d+1-\rank A}$, thus proving the assertion of Remark \ref{rem:sharp_sharpness}.

\begin{example}\label{ex:sharp_sharpness}
    Let us fix $2 \leq r \leq d-1$ and $\Ac=(A)$, where
    \begin{equation}
     A=
    \begin{pmatrix}
      \Id_r & 0
      \\
      0     & 0
    \end{pmatrix}
    \in \R^{d \times m},
    \end{equation}
    and let $\mu=\gamma' \Hc{1} \rest \gamma$, where $\gamma$ is a parametrization of a closed regular curve lying in $\spn(e_1,e_2)$. It holds
    \begin{equation}
        \Ac \mu = \dive (A \mu) = \dive \mu = 0,
    \end{equation}
    although $|\mu| \not \ll \Hc{d+1-r}$, since $d+1-r \geq 2$. 
\end{example}

\section{A sufficient condition for balancing vector-valued operators}\label{sec:vector_valued}
In this section we state Condition \ref{hyp:solving_2} and we prove Theorem \ref{thm:main_vector_valued}. We refer the reader to Section \ref{subsec:notations} for the relevant notation.

\begin{hypo}\label{hyp:solving_2}
There exists a basis $\mathcal{F}$ of $\R^m$ such that, for every $f \in \mathcal{F}$, there exist $\Bc=(B^1,\dots,B^n) \cong \Ac$, $g_1,\dots,g_n \in \R^m$, $e \in \mathbb{S}^{d-1}$ and $p \in \{0,\dots,d\}$ with the following properties.
\begin{enumerate}
\item\label{pnt:hyp_suff_1} $(B^1f,\dots,B^{n}f) = (e_1,e_2,\dots,e_{p},0,\dots,0)$, where $(e_1,e_2,\dots,e_{p})$ is an orthonormal system in $\R^d$.
\item\label{pnt:hyp_suff_2} The following relations are satisfied:
\noeqref{eq:hyp_suff_2a,eq:hyp_suff_2b,eq:hyp_suff_2c}
\begin{subequations}\label{eq:hyp_suff_2}
\begin{gather}
B^kg_k=e
\quad
\forall k \in \{1,\dots,p\};\label{eq:hyp_suff_2a}
\\[5pt]
B^\l g_k \in \spn(e_1,\dots,e_p)
\qquad
\forall k,\l \in \{1,\dots, n\}, \l \neq k;\label{eq:hyp_suff_2b}
\\[5pt]
B^\l g_k \cdot B^h f
=
- B^\l g_h \cdot B^k f
\qquad
\forall \l,h,k \in \{1,\dots,n\}.\label{eq:hyp_suff_2c}
\end{gather}
\end{subequations}
\end{enumerate}
\end{hypo}
\begin{remark}
It clearly follows $p \leq n$ as well. Moreover, the conditions \eqref{eq:hyp_suff_2} are not restrictive for $h,k \in \{p+1,\dots,n\}$, since one can choose $g_k=0$ for such $k$.
\end{remark}

Since $\Ac \cong \Bc$ yields $\Ac \mu=0$ if and only if $\Bc \mu=0$ and by virtue of Proposition \ref{prop:reduction_cubes_basis}, the following statement implies Theorem \ref{thm:main_vector_valued}.

\begin{proposition}\label{prop:hyp_sufficient_solving}
Let $\Ac$, $f \in \mathcal{F}$ and $\Bc$ be as in Condition \ref{hyp:solving_2} and let $B=B_1(0) \subset \R^d$. Then there exists $g \in C^1(\de B, \R^m)$ such that
\begin{equation}\label{eq:Bc_g=balance}
\Bc\big(f \ind{B} + g \hu \rest \de B \big)
=
0.
\end{equation}
\end{proposition}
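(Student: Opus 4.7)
The plan is to exhibit $g$ as the restriction to $\de B$ of an explicit quadratic polynomial $G \colon \R^d \to \R^m$. As a preliminary reduction, I rewrite \eqref{eq:Bc_g=balance} as a pointwise condition on $\de B$. By Lemma \ref{lmm:Amu_dive} and the distributional identity $D\ind{B} = -\nu\,\hu\rest\de B$, it becomes
\begin{equation*}
\dive\bigl((B^\ell g)\,\hu \rest \de B\bigr) = (B^\ell f\cdot \nu)\,\hu\rest\de B, \qquad \ell = 1,\dots,n.
\end{equation*}
Testing against any $\phi \in \cic(\R^d)$ and splitting $\nabla\phi$ on $\de B$ into tangent and normal components, the arbitrariness of $\de_\nu\phi|_{\de B}$ forces $B^\ell g$ to be tangent to $\de B$; integrating by parts on the closed manifold $\de B$ then reduces the problem to
\begin{equation*}
B^\ell g(x)\cdot x = 0 \quad\text{and}\quad \dive_\Sigma(B^\ell g)(x) = B^\ell f \cdot x \qquad \text{on } \de B, \ \forall \ell,
\end{equation*}
which by Condition \ref{hyp:solving_2}\ref{pnt:hyp_suff_1} amounts to $\dive_\Sigma(B^\ell g) = e_\ell\cdot x$ for $\ell\le p$ and $\dive_\Sigma(B^\ell g) = 0$ otherwise.

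I then choose
\begin{equation*}
G(x) \coloneqq -(e\cdot x)^2\,f + (e\cdot x)\sum_{k=1}^p (e_k\cdot x)\,g_k, \qquad g \coloneqq G|_{\de B}.
\end{equation*}
This ansatz is inspired by the classical case $\Ac = \curl$ and $f = e_1$, for which $G(x) = -x_1 x$ is exactly the density appearing in $D(x_1\ind{B}) = e_1\ind{B} - x_1 x\,\hu\rest\de B$. The verification exploits two algebraic consequences of Condition \ref{hyp:solving_2}: first, plugging $h = k = \ell \le p$ into \eqref{eq:hyp_suff_2c} yields $e\cdot e_k = 0$ for every $k \le p$, so $e \perp \spn(e_1,\dots,e_p)$; second, the constants $M^\ell_{jk} \coloneqq B^\ell g_k \cdot e_j$, defined for $j,k \in \{1,\dots,p\}$ and $\ell \in \{1,\dots,n\}$, are antisymmetric in $j,k$ by \eqref{eq:hyp_suff_2c} and satisfy $M^\ell_{j\ell} = e\cdot e_j = 0$, while by \eqref{eq:hyp_suff_2b} one has $B^\ell g_k = \sum_{j \le p} M^\ell_{jk}\,e_j$ whenever $\ell \ne k$.

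Armed with these facts, tangentiality $B^\ell G \cdot x = 0$ for $\ell \le p$ follows upon expanding: the two scalar contributions along $e$, namely $-(e\cdot x)^2 (e_\ell\cdot x)$ from the first summand and $(e\cdot x)^2(e_\ell\cdot x)$ from the diagonal $k = \ell$ term in the sum, cancel identically, and the remainder $(e\cdot x)\sum_{j,k\le p,\, k\ne\ell} x_j x_k M^\ell_{jk}$ vanishes by antisymmetry of $M^\ell$ combined with $M^\ell_{j\ell} = 0$; the case $\ell > p$ is analogous. For the divergence condition, I use $\dive_\Sigma V = \dive V - \nu\cdot\de_\nu V$ applied to each summand of $V = B^\ell G$: only the bilinear term $(e\cdot x)(e_\ell\cdot x)\,e$ contributes to $\dive V$, giving exactly $e_\ell\cdot x$ (using $|e|^2 = 1$ and $e\cdot e_\ell = 0$), while $\nu\cdot\de_\nu(B^\ell G)$ vanishes by the same antisymmetry/orthogonality cancellations. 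The main obstacle is guessing the form of $G$: the quadratic coefficient $-(e\cdot x)^2$ in front of $f$ is chosen precisely so that the normal-derivative contribution of $-(e\cdot x)^2 f$ cancels that of $(e\cdot x)(e_\ell\cdot x) e$, ensuring tangentiality; once this shape is found, the antisymmetry relation \eqref{eq:hyp_suff_2c} eliminates all remaining cross-terms and produces the desired divergence.
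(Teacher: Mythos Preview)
Your proof is correct and follows essentially the same route as the paper: the paper also reduces \eqref{eq:Bc_g=balance} to the two pointwise conditions on $\de B$ (this is precisely Lemma \ref{lmm:equivalent_Afree}) and then exhibits the same explicit quadratic vector field, your $G$ coinciding with the paper's $g$ up to an overall sign. The verification of tangentiality and of the tangential divergence identity in both cases rests on the same orthogonality $e\perp e_k$ and the antisymmetry $B^\ell g_k\cdot e_j=-B^\ell g_j\cdot e_k$ that you isolate; your use of $\dive_\Sigma V=\dive V-\nu\cdot\de_\nu V$ together with Euler's identity for homogeneous fields is a mild repackaging of the paper's direct computation.
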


% \subsection{Preliminary lemmas}

% We begin with the remark that the space $\spn(A^1,\dots,A^n)$ is what really characterizes the behavior of $\Ac=(A^1,\dots,A^n)$, as is stated in the next result, whose simple proof we omit.
% \begin{lemma}\label{lmm:balanceable_span}
% If $\Ac \cong \Bc$, then $\Ac \mu=0$ if and only if $\Bc \mu=0$.
% \end{lemma}

\begin{remark}\label{rem:intuition_condition}
Condition \ref{hyp:solving_2} above is motivated by the following considerations.
Let us fix $f \in \mathbb{R}^m$ and an operator $\mathcal{B} \cong \mathcal{A}$ satisfying part \ref{pnt:hyp_suff_1} of Condition \ref{hyp:solving_2}.
For each component $B^k$ of $\mathcal{B}$ which ``sees" the function $f \ind{B}$, namely $B^1,\dots,B^p$, it is relatively easy to construct a balancing vector field $h_k$ on $\partial B$.
The idea behind Condition \ref{hyp:solving_2} is to choose $h_k$ so that the sum $g=h_1+\dots +h_p$ to balances $f \ind{B}$ for all components of $\mathcal{B}$.

As already stated in the introduction, the existence of $g_1,\dots,g_p$ satisfying \eqref{eq:hyp_suff_2a} is a sort of ``shared rank-2 property" of the matrices $B^1,\dots,B^p$ and allows the presence of $f$ in the definition of $h_k$ coherently with the other components of $\mathcal{B}$. In fact $h_k=x_{p+1}x_k g_k + x_{p+1}^2f$ balances $f \ind{B}$ for $B^k$.

\eqref{eq:hyp_suff_2b} is a technical condition which ensures $B^\ell g \in \operatorname{span}(e_1,\dots,e_{p+1})$ and
\eqref{eq:hyp_suff_2c}, as already stated in the introduction, is a sort of antisymmetry of the tensor defining $\mathcal{B}$. Both ensure that the vector field $B^\ell h_k$ on $\partial B$ is $\mathcal{B}$-free for $\ell \neq k$.
\end{remark}

The remaining part of the section is devoted to the proof of Proposition \ref{prop:hyp_sufficient_solving}.
We begin with the following algebraic consequences of Condition \ref{hyp:solving_2}.

\begin{remark}[consequences of Condition \ref{hyp:solving_2}]
    Taking $h=k$ in \eqref{eq:hyp_suff_2c}, one obtains
    \begin{equation}\label{eq:cons_1_hyp_solving_2}
B^\l g_k \perp B^k f
\qquad
\forall \l,k \in \{1,\dots,n\}.
\end{equation}
In particular, choosing $k=\ell$ in the above equation gives
\begin{equation}\label{eq:e_perp_Bf}
e \perp B^k f
\qquad
\forall k \in \{1,\dots, n\}.
\end{equation}
We moreover record
\begin{equation}\label{eq:cons_2_hyp_solving_2}
B^k g_h \perp e_k
\qquad
\forall k \in \{1,\dots,p\},\,\,\forall h \in \{1,\dots,n\}.
\end{equation}
Indeed, \eqref{eq:hyp_suff_2c} with $\l=k \in \{1,\dots,p\}$ yields
\begin{equation}
0
\overset{\eqref{eq:e_perp_Bf}}{=}
e
\cdot B^h f
\overset{\eqref{eq:hyp_suff_2a}}{=}
B^k g_k \cdot B^h f
\overset{\eqref{eq:hyp_suff_2c}}{=}
- B^k g_h \cdot B^k f
\overset{(\ref{pnt:hyp_suff_1})}{=}
- B^k g_h \cdot e_k
\qquad
\forall h \in \{1,\dots,n\}.
\end{equation}
\end{remark}

The fact that the measure $\sigma = g \hu \rest \de B$ whose existence is claimed by Proposition \ref{prop:hyp_sufficient_solving} is defined on a smooth surface implies the following characterization of being $\Ac$-free, which we use in the proof of Proposition \ref{prop:hyp_sufficient_solving}.

\begin{lemma}\label{lmm:equivalent_Afree}
Let $f \in \R^m$ and let $B=B_1(0) \subset \R^d$.
Then a vector field $g \in L^1_{\hu}(\de B, \R^m)$ satisfies
\begin{equation} \label{eq:balanced_afree}
\Ac \big(f \ind{B}+g \hu \rest \de B\big)
=
0
\end{equation}
if and only if, for every $k \in \{1,\dots,n\},$ the following conditions hold:
\begin{subequations}\label{eq:diveAg_equals_Af_nu}
\begin{gather}
A^k g \perp \nu_{\de B}
\qquad
\text{on } \de B;
\label{eq:Ag_perp_face}
\\[5pt]
\dive_{\de B} A^kg
=
- (A^kf) \cdot \nu_{\de B}
\qquad
\Hc{d-1}\text{a.e.\ on } {\de B}.
\label{eq:dive_Ag_f_normal}
\end{gather}
\end{subequations}
\end{lemma}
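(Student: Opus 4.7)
The plan is a direct distributional computation. By Lemma~\ref{lmm:Amu_dive}, the identity $\Ac(f\ind{B}+g\hu\rest\de B)=0$ decouples into the $n$ scalar equations
\begin{equation*}
\dive(A^k f\ind{B}) + \dive(A^k g\,\hu\rest\de B) = 0 \quad \text{in } \Dc'(\R^d), \qquad k=1,\dots,n,
\end{equation*}
so it suffices to prove the equivalence coordinate by coordinate. Testing against an arbitrary $\phi\in\cic(\R^d)$, Gauss--Green (using that $A^k f\in\R^d$ is a constant vector) identifies $\langle\dive(A^k f\ind{B}),\phi\rangle$ with $-\int_{\de B}\phi\,(A^k f)\cdot\nu_{\de B}\,d\hu$, while the very definition of distributional divergence gives $\langle\dive(A^k g\,\hu\rest\de B),\phi\rangle = -\int_{\de B} A^k g\cdot\nabla\phi\,d\hu$. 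The $k$-th scalar equation is thus equivalent to the single boundary identity
\begin{equation*}
\int_{\de B} A^k g\cdot\nabla\phi\,d\hu + \int_{\de B}\phi\,(A^k f)\cdot\nu_{\de B}\,d\hu = 0 \qquad \forall\phi\in\cic(\R^d).
\end{equation*}

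Next I would split $\nabla\phi|_{\de B} = \nabla^T\phi + (\de_{\nu_{\de B}}\phi)\nu_{\de B}$ into its tangential and normal components. Via a standard tubular-neighborhood construction, the pair $(\phi|_{\de B},\de_{\nu_{\de B}}\phi|_{\de B})$ can be freely and independently prescribed as a pair of arbitrary smooth functions on $\de B$ by a suitable choice of $\phi\in\cic(\R^d)$. Selecting test functions with $\phi|_{\de B}\equiv 0$ and arbitrary normal derivative reduces the identity above to $\int_{\de B}(A^k g\cdot\nu_{\de B})\de_{\nu_{\de B}}\phi\,d\hu = 0$ for all such traces, and by density this forces \eqref{eq:Ag_perp_face}.

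Once \eqref{eq:Ag_perp_face} holds, $A^k g$ is tangent to $\de B$, and the tangential integration-by-parts formula on the closed smooth manifold $\de B$ (with no boundary contribution, since $\de\de B=\emptyset$) yields $\int_{\de B} A^k g\cdot\nabla^T\phi\,d\hu = -\int_{\de B}\phi\,\dive_{\de B}(A^k g)\,d\hu$. Substituting this back into the boundary identity and invoking the arbitrariness of $\phi|_{\de B}$ recovers exactly \eqref{eq:dive_Ag_f_normal}. The converse implication is obtained by reading the same chain of equivalences backwards. The only mildly delicate step is the independent prescription of Dirichlet and Neumann data on $\de B$ by test functions in $\cic(\R^d)$, which is classical for smooth hypersurfaces; everything else is a purely mechanical computation.
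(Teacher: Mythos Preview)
Your proof is correct and follows essentially the same route as the paper: reduce to scalar equations via Lemma~\ref{lmm:Amu_dive}, test with functions vanishing on $\de B$ (but with arbitrary normal derivative) to force $A^kg\perp\nu_{\de B}$, and then use tangential integration by parts on the closed manifold $\de B$ to obtain the divergence identity. The only cosmetic difference is that the paper writes down the test function explicitly as $\psi(x)=(|x|-1)\gamma(|x|)\phi(x/|x|)$ rather than invoking the abstract fact that Dirichlet and Neumann traces can be prescribed independently; these are the same idea.
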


\begin{proof}
   Since we can argue component-wise on $\Ac$, we assume that $\Ac=(A)$ for some $A \in \mathbb{R}^{d \times m}$.
   \begin{description}
        \item[$\bullet$ Step 1: if $\Ac \big(g \hu \rest \de B\big)$ is a measure, then \eqref{eq:Ag_perp_face} holds]
       Let $\mu \coloneqq \Ac \big(g \hu \rest \de B\big)$. By Lemma \ref{lmm:Amu_dive}, we have
\begin{equation}\label{eq:dive_is_measure}
	\mu = \dive(Ag\hu\rest \de B).
\end{equation}
Notice that $\supp\mu\subset\supp (g\hu\rest\de B)\subset\de B$.
Given $\phi\in C^\infty(\de B,\R)$  and $\gamma\in C^\infty_c((0,2),\R)$ such that $\gamma\equiv1$ in a neighborhood of $1$, let
\begin{equation*}
	\psi(x) = (|x|-1) \gamma(|x|)\phi\left( \frac{x}{|x|} \right).
\end{equation*}
Since $0 \notin \supp \gamma$, $\psi\in C^\infty_c(\R^d)$ and it holds
\begin{equation*}
	\nabla\psi(x) = \frac{x}{|x|}\gamma(|x|)\phi\left( \frac{x}{|x|} \right) + (|x|-1)\gamma'(|x|)\frac{x}{|x|}\phi\left( \frac{x}{|x|} \right) + (|x|-1)\gamma(|x|) \nabla_{\de B}\phi\left( \frac{x}{|x|} \right) \frac{1}{|x|}.
\end{equation*}
On the other hand, $\psi\equiv0$ on $\de B$. Therefore, by \eqref{eq:dive_is_measure}, we have
\begin{equation*}
	0=\iprod{\mu}{\psi} = \int_{\de B} Ag(x)\cdot\nabla\psi(x)\dif\hu(x)
 = \int_{\de B} Ag(x) \cdot \frac{x}{|x|}\phi\left( \frac{x}{|x|} \right)\dif\hu(x).
\end{equation*}
Since $\phi$ is arbitrary, we deduce $Ag(x) \cdot \frac{x}{|x|}=0$ for $\hu$-a.e. $x\in\de B$.
    \item[$\bullet$ Step 2: given \eqref{eq:Ag_perp_face}, \eqref{eq:balanced_afree} and \eqref{eq:dive_Ag_f_normal} are equivalent]
    Since \eqref{eq:Ag_perp_face} necessarily holds, to finish the proof it is sufficient to assume  $Ag(x) \perp \nu_{\de B}(x)$ for $\hu$-a.e. $x \in \de B$ and prove that \eqref{eq:balanced_afree} and \eqref{eq:dive_Ag_f_normal} are equivalent. We have
\begin{equation}
\Ac(f \ind{B})
=
- \scal{Af}{\nu_{\de B}} \hu \rest \de B.
\end{equation}
Hence, by Lemma \ref{lmm:Amu_dive}, \eqref{eq:balanced_afree} is equivalent to
\begin{equation}\label{eq:equivalent_dive_faces}
-\scal{Af}{\nu_{\de B}} \hu \rest \de B
=
\dive(A g \hu \rest \de B).
\end{equation}
On the other hand we have
\begin{equation}
   \begin{split}
    \dive(A g \hu \rest \de B)
    = &
    \dive_{\de B}(A g \hu \rest \de B) + \de_{\nu_{\de B}} \big(Ag \cdot \nu_{\de B} \hu \rest \de B  \big)
    \\
    \overset{\eqref{eq:Ag_perp_face}}&{=}
    \dive_{\de B}(A g) \hu \rest \de B,
   \end{split} 
\end{equation}
obtaining the conclusion.\qedhere
   \end{description}
\end{proof}

\begin{proof}[Proof of Proposition \ref{prop:hyp_sufficient_solving}]
Since by \eqref{eq:e_perp_Bf} we have $e\perp e_k$ for $k \in \{1,\dots,p\}$, we
call $e_{p+1}\coloneqq e$ and (if $p+1<d$) we choose $\{e_{p+2},\dots, e_{d}\}$ so that $\{e_1,\dots,e_d\}$ is an orthonormal basis of $\R^d$; up to a change of coordinates, we may assume $\{e_1,\dots,e_d\}$ is the standard orthonormal basis of $\R^d$.

We define $g \colon \de B \to \R^m$ as
\begin{equation}
    g(x)
    =
    -\left( x_{p+1}\sum_{h=1}^p x_h g_h \right) +x_{p+1}^2 f 
    \qquad
    \forall x \in \de B.
\end{equation}
where $g_1,\dots,g_p$ are given by Condition \ref{hyp:solving_2}. $B^\ell g$ is illustrated in Figure \ref{fig:vector}.
\begin{figure}
    \centering
    \includegraphics[scale=0.95]{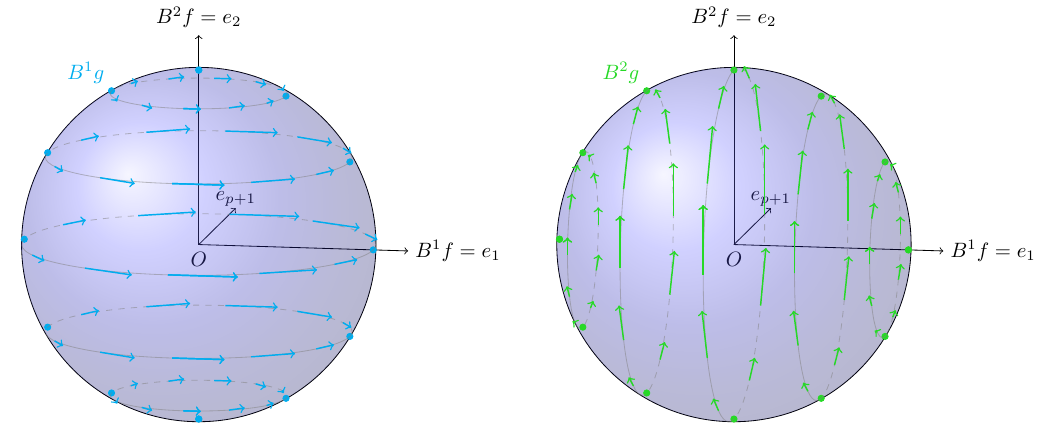}
    \caption{illustration of $B^\ell g$ for $d=3$ and $n=p=2$. Condition \ref{hyp:solving_2} implies that $B^1 f \perp B^2 f$ and both these vectors are perpendicular to $e=e_{p+1}$. Notice that, since $d=3$, Condition \ref{hyp:solving_2} implies $B^\ell g_h=0$ for $\ell \neq k$, thus $B^\ell g \in \spn(e_\ell,e_{p+1})$.}
    \label{fig:vector}
\end{figure}

By Lemma \ref{lmm:equivalent_Afree} it suffices to show that $g$ satisfies \eqref{eq:Ag_perp_face} and \eqref{eq:dive_Ag_f_normal} for every $k \in \{1,\dots,n\}$.

\begin{enumerate}[label=\textit{\alph*)}]
    \item Since $\nu_{\de B}(x)=x$, for every  $\ell \in \{1,\dots,n\}$ we have to show that
    \begin{equation}
        B^\ell g(x) \cdot x=0
        \qquad
        \forall x \in \de B.
    \end{equation}
    If $\ell \in \{1,\dots,p\}$, then
        \eqref{eq:hyp_suff_2a} and \eqref{eq:hyp_suff_2b} yield $B^\ell g(x) \in \spn(e_1,\dots,e_{p+1}) $, hence
        % For simplicity of notations we assume $\ell=1$. We have
        \begin{equation}\label{eq:vector_piu_dettagli}
            \begin{split}
                B^\ell g(x) \cdot x
                & =
                -x_{p+1} \left( \sum_{k=1}^p x_k B^\ell g_k - x_{p+1}B^\ell f\right) \cdot \left( \sum_{h=1}^{p+1} x_h e_h \right)
                \\
                % \overset{(\ref{pnt:hyp_suff_1}),\eqref{eq:hyp_suff_2a}}&{=}
                & =
                -x_{p+1} \left( \sum_{k=1}^p x_k B^\ell g_k - x_{p+1}B^\ell f \right) \cdot \left( \sum_{h=1}^{p} x_h B^h f + x_{p+1}B^\ell g_\ell\right)
                \\
                & =
                - x_{p+1} \sum_{1 \leq k< h \leq p} x_kx_h \overbrace{\left(B^\ell g_k \cdot  B^h f + B^\ell g_h \cdot B^k f \right)}^{=0 \text{ by } \eqref{eq:hyp_suff_2c}}
                -  x_{p+1}^2 \overbrace{\sum_{k=1}^p x_k B^\ell g_k \cdot B^\ell g_\ell}^{=x_\ell \text{ by } \eqref{eq:hyp_suff_2b}, \eqref{eq:hyp_suff_2a}}
                \\
                & \quad
                + x_{p+1}^2  \underbrace{B^\ell f \cdot \sum_{h=1}^{p} x_h B^h f}_{=x_\ell \text{ by pnt.\  (\ref{pnt:hyp_suff_1})}}
                + x_{p+1}^3  \underbrace{B^\ell f \cdot B^\ell g_\ell}_{=0 \text{ by } \eqref{eq:cons_1_hyp_solving_2}}
                \\
                & =
                0.
            \end{split}
        \end{equation}
        % where the last equality is obtained by computing scalar products and by exploiting point \ref{pnt:hyp_suff_1} of Condition \ref{hyp:solving_2}, \eqref{eq:hyp_suff_2b}, \eqref{eq:hyp_suff_2c} and \eqref{eq:cons_1_hyp_solving_2}.
         
         If $\ell \in \{p+1,\dots,n\}$, then $B^\ell f=0$ and $B^\ell g(x) \in \spn(e_1,\dots,e_p)$ by \eqref{eq:hyp_suff_2b}. Hence
         \begin{equation}
             \begin{split}
                 B^\ell g \cdot x
                 & =
                 x_{p+1} \left( \sum_{k=1}^p x_k B^\ell g_k \right) \cdot \left( \sum_{h=1}^p x_h B^h f\right)
                 \\
                 & =
                 x_{p+1} \sum_{k=1}^p x_k^2 \underbrace{B^\ell g_k \cdot B^k f}_{=0 \text{ by } \eqref{eq:cons_1_hyp_solving_2}}
                 \,\,+
                 x_{p+1} \sum_{1 \leq k< h \leq p} x_kx_h \underbrace{\left(B^\ell g_k \cdot  B^h f + B^\ell g_h \cdot B^k f \right)}_{=0 \text{ by } \eqref{eq:hyp_suff_2c}}
                 \\
                 & =
                 0.
             \end{split}
         \end{equation}

    \item Now we have to prove \eqref{eq:dive_Ag_f_normal}, namely that for every $\ell \in \{1,\dots,n\}$ it holds
    \begin{equation}
        -\dive_{\de B} B^\ell g(x)
        =
        B^\ell f \cdot x
        \qquad
        \forall x \in \de B.
    \end{equation}
    We first assume $\ell \in \{1,\dots,p\}$ and, for simplicity of notations, that $\ell=1$. Hence $B^1 f \cdot x= x_1$ for every $x \in \de B$. We compute
    \begin{equation}
    \begin{split}
        -\dive_{\de B} B^1 g(x)
        & =
        P_{T_x \de B}\nabla x_{p+1} \cdot \Big( x_1 B^1g_1 + x_2 B^1 g_2 + \dots + x_p B^1 g_p - x_{p+1} B^1 f\Big)
        \\
        & \quad +
        x_{p+1}\dive_{\de B}\Big( x_1 B^1g_1 + x_2 B^1 g_2 + \dots + x_p B^1 g_p - x_{p+1} B^1 f\Big)
        \\
        % \overset{\eqref{eq:Ag_perp_face}}&{=}
        &=
        e_{p+1} \cdot \Big( x_1 e_{p+1} + x_2 B^1 g_2 + \dots + x_p B^1 g_p - x_{p+1} e_1\Big)
        \\
         & \quad +
        x_{p+1} \dive \Big( x_1 e_{p+1} + x_2 B^1 g_2 + \dots + x_p B^1 g_p - x_{p+1} e_1\Big)
        \\
        \overset{\eqref{eq:hyp_suff_2b}}&{=}
        x_1 + x_{p+1} \Big( e_1 \cdot e_{p+1} + e_2 \cdot B^1 g_2 + \dots + e_p \cdot B^1 g_p - e_{p+1} \cdot e_1 \Big)
        \\
        \overset{\eqref{eq:cons_1_hyp_solving_2}}&{=}
        x_1,
    \end{split}
    \end{equation}
    as desired; notice that in the second equality we used the fact that the projection $P_{T_x \partial B}$ is self-adjoint and that the second term of the scalar product lies on $T_x \partial B$, as we proved in part \textit{a)}; in the same equality, the tangential divergence becomes a full divergence since by $x \perp T_x \partial B$ it follows
    \begin{equation}
        \operatorname{div}_{\partial B} B^1 g
        =
        \operatorname{div} B^1 g - (\partial_x B^1 g) \cdot x
        =
        \dive B^1 g,
    \end{equation}
    where last equality is due to $\partial_x B^1 g \in T_x \partial B$, which in turn follows by $B^1 g(x) \cdot x \equiv 0$.
    
     On the other hand, if $\ell \in \{p+1,\dots,n\}$, then $B^\ell f=0$, hence $B^\ell f \cdot x =0$. Moreover, with similar computations as above,
    \begin{equation*}
    \begin{split}
        \dive_{\de B} B^\ell g(x)
        & =
        e_{p+1} \cdot \sum_{k=1}^p x_k B^\ell g_k
        +
        x_{p+1} \dive \sum_{k=1}^p x_k B^\ell g_k
        \\
        \overset{\eqref{eq:hyp_suff_2b}}&{=}
        0 +  x_{p+1} \sum_{k=1}^p e_k \cdot B^\ell g_k
        \\
        \overset{\eqref{eq:cons_1_hyp_solving_2}}&{=}
        0.\qedhere
    \end{split}
    \end{equation*}
\end{enumerate}

\end{proof}

\section{Examples and further observations on Condition \ref{hyp:solving_2}}\label{sec:examples_and_obs}

\subsection{Exterior derivatives}\label{sec:exterior_derivative}

In this Subsection, we prove Corollary \ref{cor:diff_forms}.
Namely, given\footnote{The case $q=0$ is trivial since any $d$-form in $\R^d$ is closed, while the case $q=d$ is the differential acting on scalar functions, which is not $k$-balanceable for any $k \in \{1,\dots,d-1\}$ because it corresponds to the gradient.} $q \in \{1,\dots,d-1\}$, we show that the exterior derivative operator $\Ac$ acting on $(d-q)$-forms satisfies Condition \ref{hyp:solving_2}, thus showing that it is $(d-1)$-balanceable.
In other words, we prove that any $(d-q)$-form $f \in L^1(\R^d)$ is the absolutely continuous part of a \emph{closed} $(d-q)$-form.

\begin{remark}
    By identifying vector fields $\R^d\to\R^d$ with $1$-forms, Corollary \ref{cor:diff_forms} recovers Alberti's original result (\citep[Theorem 3]{alberti1991lusin}), while identifying vector fields with $(d-1)$-forms, Corollary \ref{cor:diff_forms} recovers the fact that $\dive$ is $(d-1)$-balanceable (consistently with Theorem \ref{thm:main_scalar}, see also Subsection \ref{sec:scalar_comparing_vector}).
\end{remark}

% In particular, choosing $q=d-1$, our Theorem \ref{thm:main_vector_valued} extends the result proved in \cite{alberti1991lusin} by Alberti.

We start by fixing some notation.
Throughout the section, we will be working with $(d-p)$-multi-indices (i.e. $(d-p)$-tuples $I=(i_1,\dots,i_{d-p})$ such that $1\le i_1<i_2<\dots<i_{d-p}\le d$), where $1\le p\le d$ is an integer.
We also introduce the standard orthonormal basis $\mathcal{F}$ for the space $\Lambda^{d-p}(\R^d)$ of $(d-p)$-covectors:
\begin{equation}\label{eq:ext_basis}
    \mathcal{F} \coloneqq \big\{f_I\coloneqq \dif x_{i_1}\wedge \dif x_{i_2}\wedge\dots \wedge \dif x_{i_{d-p}} \,\,\,\text{ where $I=(i_1,i_2,\dots,i_{d-p})$ is a $(d-p)$-multi-index}\big\}.
\end{equation}
Let us now fix $q \in \{1,\dots,d-1\}$.
% Furthermore, let us fix the following standard orthonormal basis of the space of $(d-q)$-covectors $\Lambda^{d-q}(\R^d)$:
% \begin{equation}\label{eq:ext_basis}
%     \mathcal{F} \coloneqq \{f_I\coloneqq dx_{i_1}\wedge dx_{i_2}\wedge\dots \wedge dx_{i_{d-q}} \,\,\,\text{ where $I=(i_1,i_2,\dots,i_{d-q})$ is a $(d-q)$-multi-index}\}.
% \end{equation}
Given a $(d-q)$-multi-index $I$ and a $(d-q+1)$-multi-index $J$:
\begin{itemize}
        \item We write $I \subset J$ if $J$ can be obtained by adding a component to $I$.
        \item If $I\subset J$, we write $J \setminus I$ to denote the only $\{\ell\} \subset \{1,\dots,d\}$ which produces $J$ when added to $I$.
\end{itemize}
Lastly, we fix the standard orthonormal basis $\{e_\ell\}_{\ell=1}^d$ of $\R^d$. 

The exterior derivative acting on $(d-q)$-forms is a first-order linear operator
\begin{equation}
    \Ac \in C^\infty\big(\R^d,\Lambda^{d-q}(\R^d)\big) \to C^\infty\big(\R^d,\Lambda^{d-q+1}(\R^d)\big),
\end{equation}
hence $\mathcal{A} \in \op{d,m,n}$ with
\begin{equation}
    m \coloneqq \dim \Lambda^{d-q}(\R^d)= \binom{d}{d-q},
    \qquad
    n \coloneqq \dim \Lambda^{d-q+1}(\R^d) = \binom{d}{d-q+1}.
\end{equation}
With the notation introduced in Section \ref{subsec:notations}, we may represent the exterior derivative with $n$ matrices $A^J\in \mathbb{R}^{d \times m}$ where $J=(j_1,j_2,\dots,j_{d-q+1})$ ranges over the set $\{J_1,\dots,J_n\}$ of $(d-q+1)$-multi-indices, and
% The action of $\Ac=(A^1,\dots,A^n)$ is given by the matrices $A^J \in A^{d \times m}$ defined as follows, where $J=(j_1,j_2,\dots,j_{d-q+1})$ is a multi-index of order $(d-q+1)$:
    \begin{equation}\label{eq:ext_def_A_J}
        A^J f_I=
        \begin{cases}
        (-1)^{\ell+1} e_\ell        & \text{if } I=(j_1,j_2,\dots,j_{\ell -1}, j_{\ell+1}, \dots j_{d-q+1})
        \\
        0                   & \text{if } I \not \subset J.
        \end{cases}
        % \qquad
        % \forall J \text{ multi-index of order } d-q+1.
    \end{equation}
% In the above formula and throughout the proof below, we use the following abbreviations for a $(d-p)$-multi-index $I$ and a $(d-p+1)$-multi-index $J$:
%     \begin{itemize}
%         \item We write $I \subset J$ if $J$ can be obtained by adding a component to $I$.
%         \item If $J \supset I$, we write $J \setminus I$ to denote the only $\{\ell\} \subset \{1,\dots,d\}$ which produces $J$ when added to $I$.
%     \end{itemize}

\begin{proposition}\label{prop:ext_satisfies_hypo}
The exterior derivative $\Ac$ satisfies Condition \ref{hyp:solving_2} with $\Bc=(A^{J_1},\dots,A^{J_n})$ chosen above and $\mathcal{F}$ being the standard basis of $\Lambda^{d-q}(\R^d)$ defined in \eqref{eq:ext_basis}.
\end{proposition}

% \begin{remark}\label{rem:alberti_generalization}
%     By choosing $q=d-1$, Proposition \ref{prop:ext_satisfies_hypo} and Theorem \ref{thm:main_vector_valued} yield that $\Ac=\operatorname{curl}$ is $(d-1)$-balanceable, obtaining a proof of \cite[Theorem 3]{alberti1991lusin}.
% \end{remark}

\begin{proof}[Proof of Proposition \ref{prop:ext_satisfies_hypo}]
    % Let us fix\footnote{The case $q=0$ is trivial since any $d$-form in $\R^d$ is closed.} $0 <  q \leq d$. In the standard coordinates of $\R^d$ the exterior derivative acting on $(d-q)$-forms can be represented by the first-order linear operator $\mathcal{A} \in \op{d,m,n}$ with $m\coloneqq \binom{d}{d-q}$ and $n\coloneqq \binom{d}{d-q+1}$ acting as follows: let us consider the standard orthonormal basis $\{e_\ell\}_{\ell=1}^d$ of $\R^d$ and the standard orthonormal basis $\{f_I\}$ of $\R^m$, where $I=(i_1,i_2,\dots,i_{d-q})$ is a multi-index of length $d-q$; for any multi-index $J=(j_1,j_2,\dots,j_{d-q+1})$ of order $(d-q+1)$ (with $j_1 < j_2 <\dots < j_{d-q+1}$) we consider the matrix $A^J$ acting as follows
    % \begin{equation}\label{eq:ext_def_A_J}
    %     A^J f_I=
    %     \begin{cases}
    %     (-1)^{\ell+1} e_\ell        & \text{if } I=(j_1,j_2,\dots,j_{\ell -1}, j_{\ell+1}, \dots j_{d-q+1})
    %     \\
    %     0                   & \text{if } I \not \subset J.
    %     \end{cases}
    %     \qquad
    %     \forall J \text{ multi-index of order } d-q+1.
    % \end{equation}

    % Throughout the proof, we will use the following abbreviations for a $(d-p)$-multi-index $I$ and a $(d-p+1)$-multi-index $J$:
    % \begin{itemize}
    %     \item We write $I \subset J$ if $J$ can be obtained by adding a component to $I$.
    %     \item If $J \supset I$, we write $J \setminus I$ to denote the only $\{\ell\} \subset \{1,\dots,d\}$ which produces $J$ when added to $I$.
    % \end{itemize}
   % We claim that $\Ac$ satisfies Condition \ref{hyp:solving_2} with this standard choice of the basis of $\R^m$.
   For simplicity of notations we fix $I\coloneqq (q+1,q+2,\dots,d)$ and we prove that items 1 and 2 of Condition \ref{hyp:solving_2} hold true for $f_I=f_{(q+1,q+2,\dots,d)}\in\Fc$.
   The argument is exactly the same for any other $(d-q)$-multi-index.
   Notice that, with this choice, if $I\subset J$ then $J=(\ell,q+1,q+2,\dots,d)\eqqcolon (\ell,I)$ for some $\ell\in\{1,\dots,q\}$.
   
   By \eqref{eq:ext_def_A_J}, it holds
    \begin{equation}\label{eq:ext_def2}
        A^J f_I
        =
        \begin{cases}
            e_\ell              & \text{if } J=(\ell,I) \text{ for some } \ell \in (1,\dots,q)
            \\
            0                   & \text{if } J \not \supset I.
        \end{cases}
    \end{equation}
    We choose
    \begin{equation}\label{eq:ext_choices}
        p=q,
        \qquad
        \{B^\ell\}_{\ell=1}^{p}=\{A^{(\ell,I)}\}_{\ell=1}^{p},
        \qquad
        e=(-1)^{d-p} e_d,
        \qquad
        g_\ell =
        \begin{cases}
            f_{(\ell,p+1,p+2,\dots,d-1)}                & \text{if } \ell \in \{1,\dots,p\}
            \\
            0                                   & \text{if } \ell \in \{p+1,\dots,n\}.
        \end{cases}
    \end{equation}
    We also choose $\{B^\ell\}_{\ell \geq p+1}$ to be some ``listing" of $\{A^J\}_{J \not \supset I}$, which we do not write explicitly, since the exact ordering will not have any influence on the computations below, where we are going to show that the above choices satisfy Condition \ref{hyp:solving_2}.

    \begin{enumerate}
        \item The set of matrices $\{B^k\}_{\substack{k=1}}^{p}$ are such that
\begin{equation}
B^k f_I = A^{(k,I)} f_I \overset{\eqref{eq:ext_def2}}{=} e_k
\qquad
\forall k \in \{1, \dots, p\}
\end{equation}
and, if $k\in\{p+1,\dots,n\}$, then $B^k=A^J$ for some $J\not\supset I$, hence
\begin{equation}
B^kf_I= 0 \qquad
\forall k \in \{p+1, \dots, n\}.
\end{equation}
Hence the matrices $\{B^k\}_{k=1}^{n}$ satisfy point \ref{pnt:hyp_suff_1} of Condition \ref{hyp:solving_2} with
\begin{equation}
    (B^1 f, \dots B^p f, B^{p+1}f \dots, B^n f)=(e_{1},\dots,e_p,0,\dots,0).
\end{equation}

\item We now prove that the conditions of point \ref{pnt:hyp_suff_2} are satisfied.

\begin{enumerate}
\item 
Since
\begin{equation}
B^k g_k = A^{(k,I)}f_{(k,p+1,p+2,\dots,d-1)} \overset{\eqref{eq:ext_def_A_J}}{=} (-1)^{d-p} e_d = e
\qquad
\forall k \in \{1,\dots,p\},
\end{equation}
\eqref{eq:hyp_suff_2a} is satisfied.

\item
Since $g_k=0$ for $k > p$, we only have to check the validity of \eqref{eq:hyp_suff_2b} for $k \leq p$. In this case $g_k=f_{(k,p+1,p+2,\dots,d-1)}$, hence
\begin{equation}
\begin{split}
    B^\ell g_k = A^J f_{(k,p+1,p+2,\dots,d-1)} \notin \spn(e_1,\dots,e_p)
    & \overset{\eqref{eq:ext_def_A_J}}{\iff}
    J \supset (k,p+1,p+2,\dots,d-1) \,\text{and}
    \\
    & \qquad \quad J \setminus (k,p+1,p+2,\dots,d-1) \not \subseteq \{1,\dots,p\}
    \\
    & \iff
    J = (k,p+1,p+2,\dots,d-1,d)
    \\
    & \overset{\eqref{eq:ext_choices}}{\iff}
    \ell=k.
\end{split}
\end{equation}
Thus \eqref{eq:hyp_suff_2b} holds true.

\item
It remains to show \eqref{eq:hyp_suff_2c}. In order to do so, we first notice that, if $k \geq p+1$, then $g_k=0$ and $B^kf=0$, thus
\begin{equation}
B^\l g_k \cdot B^h f
=
0
=
- B^\l g_h \cdot B^k f
\qquad
\forall \ell \in \{1,\dots,n\}.
\end{equation}
Since the same holds for $h\geq p+1$, it remains to deal with the case $1 \leq k \leq h \leq p$.

Let us assume $B^\ell=A^J$ for some $(d-p+1)$-multi-index $J$. Then 
\begin{equation}
    B^\ell g_k \cdot B^h f
    =
    A^J f_{(k,p+1,\dots,d-1)} \cdot e_h
    \overset{\eqref{eq:ext_def_A_J}}{=}
    \begin{cases}
        % 0                       & \text{if } J \not \supset (k,p+1,\dots,d-1) \text{ since } A^J f_{(k,p+1,\dots,d-1)}=0
        % \\
        0                       & \text{if } J \neq (k,h,p+1,\dots,d-1)
        % \text{ since } A^J f_{(k,p+1,\dots,d-1)}\perp e_h
        \\
        -1                      & \text{if } J= (k,h,p+1,\dots,d-1)
%        \text{by \eqref{eq:ext_def_A_J}},
    \end{cases} 
\end{equation}
while on the other hand we have
\begin{equation}
    B^\ell g_h \cdot B^k f
    =
    A^J f_{(h,p+1,\dots,d-1)} \cdot e_k
    \overset{\eqref{eq:ext_def_A_J}}{=}
    \begin{cases}
        % 0                       & \text{if } J \not \supset (h,p+1,\dots,d-1) 
        % \\
        0                       & \text{if } J \neq (k,h,p+1,\dots,d-1)
        \\
        1                      & \text{if } J= (k,h,p+1,\dots,d-1),
    \end{cases}
\end{equation}
which completes the proof of \eqref{eq:hyp_suff_2c}.\qedhere
% Since $B^h f=A^{(h,I)}f_I=e_h$, in order to have $B^\ell g_k \cdot B^h f \neq 0$, the multi-index $J$ associated to $\ell$ must satisfy

% By \eqref{eq:basis_curl_conditions} it holds $B^h f=A^{(h,I)}f_I=e_h$ and $B^k f_d=A^{kd}f_d=-g_k$, thus
% \begin{equation}
%     B^\l g_k \cdot B^h f = B^\l f_k \cdot (-f_h) = - B^\l f_h \cdot (-f_k) = -B^\l g_h \cdot B^k f_d,
% \end{equation}
% where the third equality follows by the fact that $B^\ell$ is antisymmetric.\qedhere
\end{enumerate}
    \end{enumerate}
\end{proof}

We conclude by stating the following conjecture on the sharp value of $k$ for which the exterior derivative is $k$-balanceable.

\begin{conjecture}\label{conj:ext_derivative}
The exterior derivative acting on $(d-q)$-forms is $k$-balanceable if and only if $k \in \{q,\dots,d-1\}$.
\end{conjecture}

\begin{remark}
    Since $\Ac$ is represented by matrices of rank $d-q+1$, part \ref{item:main_sharpness} of Theorem \ref{thm:main_scalar} implies that it is not $k$-balanceable for $k< d+1-(d-q+1)=q$.
\end{remark}

\subsection{Comparing scalar and vector case}\label{sec:scalar_comparing_vector}
It is worth spending a few words on the relation between Theorem \ref{thm:main_scalar} and Theorem \ref{thm:main_vector_valued}. We start with the following remarks.
\begin{itemize}
    \item Theorem \ref{thm:main_scalar} is sharp in the sense that the condition $\rank A \geq 2$ is necessary and sufficient to positively answer Question \ref{prb:alberti} for scalar operators; moreover Theorem \ref{thm:main_scalar} sharply characterizes ``how singular" the measure $\sigma$ can be in order to have $\Ac(f +\sigma)=0$, depending on $\rank A$.
    \item On the other hand, Condition \ref{hyp:solving_2} is sufficient to positively answer Question \ref{prb:alberti} and the measure $\sigma$ built in the proof of Theorem \ref{thm:main_vector_valued} is $(d-1)$-rectifiable.
\end{itemize}

As mentioned in Section \ref{sec:intro}, Condition \ref{hyp:solving_2} is satisfied by any balanceable scalar-valued operator:
\begin{proposition}\label{prop:scalar_condition_hyp}
    Let $\Ac=(A) \in \op{d,m,1}$ be a scalar operator. Then $\Ac$ satisfies Condition \ref{hyp:solving_2} if and only if $\rank A=0$ or $\rank A \geq 2$.
\end{proposition}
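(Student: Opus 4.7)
The plan is to specialize Condition \ref{hyp:solving_2} to the scalar case $n=1$, where it collapses to a single algebraic requirement on $\mathrm{range}(A)$. First I would observe that $\Bc \cong \Ac$ forces $\Bc = (\lambda A)$ for some nonzero $\lambda \in \R$, that the integer $p$ must lie in $\{0,1\}$ (since $p \le n = 1$, cf.\ the remark after Condition \ref{hyp:solving_2}), and that, among the relations in \eqref{eq:hyp_suff_2}, \eqref{eq:hyp_suff_2b} is vacuous and \eqref{eq:hyp_suff_2c} reduces to the single scalar equation $Bg_1 \cdot Bf = 0$. So only two objects matter: a unit vector $e_1$ (used when $p=1$) and a unit vector $e$, both living in $\mathrm{range}(B) = \mathrm{range}(A)$.

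Next I would analyze the two possible choices of $p$ for each basis element $f \in \mathcal{F}$. If $Af = 0$, choosing $p = 0$ makes $Bf = 0$ and all remaining conditions vacuous, so such $f$ never causes trouble. If $Af \neq 0$, then $p = 0$ is impossible, so necessarily $p = 1$; the conditions become that $Bf = e_1$ is a unit vector (fixing $\lambda = \pm 1/|Af|$ and $e_1 = \pm Af/|Af|$), that $Bg_1 = e \in \mathbb{S}^{d-1}$, and that $e \perp e_1$, i.e.\ $e \perp Af$. Hence the whole question reduces to: does $\mathrm{range}(A)$ contain a unit vector orthogonal to $Af$?

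For sufficiency, assume $\rank A \ge 2$ and pick any basis $\mathcal{F}$ of $\R^m$. For each $f \in \mathcal{F}$ with $Af \ne 0$, the subspace $\mathrm{range}(A) \cap (Af)^{\perp}$ has dimension at least $\rank A - 1 \ge 1$, so a suitable unit vector $e$ exists; since $e \in \mathrm{range}(A)$, one can take $g_1 \in \R^m$ with $\lambda A g_1 = e$, and Condition \ref{hyp:solving_2} is verified. For necessity, suppose the condition holds; since $A \ne 0$, every basis $\mathcal{F}$ of $\R^m$ contains some $f$ with $Af \ne 0$, and the discussion above produces a unit vector $e \in \mathrm{range}(A)$ orthogonal to $Af$. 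If $\rank A = 1$, then $\mathrm{range}(A) = \spn(Af)$, so no such $e$ exists, a contradiction; thus $\rank A \ge 2$. The main (and rather modest) obstacle is just careful bookkeeping of which parts of \eqref{eq:hyp_suff_2} are vacuous when $n = 1$ and $p \in \{0,1\}$.
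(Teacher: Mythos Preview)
Your proof is correct and follows essentially the same approach as the paper's: both unpack Condition \ref{hyp:solving_2} in the scalar case $n=1$, observe that \eqref{eq:hyp_suff_2b} is vacuous and \eqref{eq:hyp_suff_2c} reduces to $Bg_1 \perp Bf$, and then argue that the existence of a unit vector $e \in \Im A$ orthogonal to $Af$ (for $f \in \mathcal{F}$ with $Af \neq 0$) is equivalent to $\rank A \ge 2$. Your treatment of the case split $p \in \{0,1\}$ is slightly more explicit than the paper's, but the substance is identical.
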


\begin{proof}
        If $\rank A=0$, then $\Ac$ is the null-operator, which clearly satisfies Condition \ref{hyp:solving_2} for every $f \in \R^m$ by choosing $p=0$ and $g_1=0$.
        Let us assume $\rank A \geq 2$, let us fix any basis $\mathcal{F}=\{f_1,\dots,f_m\}$ of $\R^m$ and $f \in \mathcal{F}$. If $A f=0$, then the properties defined in Condition \ref{hyp:solving_2} are trivially satisfied by choosing $p=0$ and $g_1 =0 \in \R^m$. Thus we can assume that $A f \neq 0$; then, for some $\lambda \in \R\setminus\{0\}$,
        \begin{equation}
            \left( {\lambda} A \right) f\eqqcolon e_1 \in \Sph^{d-1}.
        \end{equation}
        Moreover, since $\rank A \geq 2$, there exists $g_1 \in \R^m$ such that
    \begin{equation}
        e_1\perp\left( {\lambda} A \right) g_1 \eqqcolon e \in \Sph^{d-1}.
    \end{equation}
    Setting $B \coloneqq {\lambda} A$, these are precisely part \ref{pnt:hyp_suff_1} and \eqref{eq:hyp_suff_2a} of Condition \ref{hyp:solving_2}. Since $n=1$, \eqref{eq:hyp_suff_2b} is clearly void, while \eqref{eq:hyp_suff_2c} follows from $e_1 \cdot e=0$.

    Conversely, let us assume that $\Ac=(A)$ satisfies Condition \ref{hyp:solving_2} for some basis $\Fc$ of $\R^m$. If $Af=0$ for every $f \in \Fc$, then $A=0$; otherwise let us fix $f \in \Fc$ such that $Af \neq 0$. In this last case, choosing $p=1$, there exists $B=\lambda A$ which satisfies the properties of Condition \ref{hyp:solving_2} for $f$. In particular, $Bf \eqqcolon e_1 \in \Sph^{d-1}$ and there exists $g_1 \in \R^m$ such that $B g_1\eqqcolon e \in \Sph^{d-1} \cap (Bf)^\perp$ by \eqref{eq:hyp_suff_2a} and  \eqref{eq:hyp_suff_2c}. This implies $\rank A \geq 2$.
\end{proof}

% \begin{remark}\label{rem:comparing_scalar_vector}
%     By virtue of Proposition \ref{prop:scalar_condition_hyp}, Theorem \ref{thm:main_vector_valued} provides a positive answer to Question \ref{prb:alberti} for scalar operators $\Ac=(A)$ with $\rank A \geq 2$: every $f \in L^1(\R^d,\R^m)$ is $\Ac$-balanceable with a $(d-1)$-rectifiable measure. Since $\rank A \geq 2$, it is also possible to choose $k=d+1-2=d-1$ in Theorem \ref{thm:main_scalar}, which states that we can balance any $L^1$ function with a $(d-1)$-rectifiable measure.

%     It is worth noting that the $(d-1)$-rectifiable measure produced in the proof of Theorem \ref{thm:main_scalar} and the one whose existence is stated in Remark \ref{rk:sipuofarecolcubo} (which is produced as in the proof of Proposition \ref{prop:hyp_sufficient_solving}) are different, in general. Indeed, let us consider $v \in \R^m$ and the characteristic function of a cube $v \ind{Q}$. The measure built in the proof of Theorem \ref{thm:main_scalar} charges the interior of $Q$, while the one given in Remark \ref{rk:sipuofarecolcubo} is supported on $\de Q$.
% \end{remark}

% Per farla più breve, proporrei di modificare questo remark come segue, che consente di eliminare il remark \ref{rk:sipuofarecolcubo}.
\begin{remark}\label{rem:comparing_scalar_vector}
    If $\mathcal{A}=(A)$ with $\rank A \geq 2$, then $\Ac$ is $(d-1)$-balanceable by Theorem \ref{thm:main_scalar}. The same conclusion can be inferred by Theorem \ref{thm:main_vector_valued}, thanks to Proposition \ref{prop:scalar_condition_hyp}.

    It is worth noting that the $(d-1)$-rectifiable measures $\sigma$ and $\sigma'$, produced respectively by the proof of Theorem \ref{thm:main_scalar} and the proof of Theorem \ref{thm:main_vector_valued}, are different in general. Indeed, if $f=v \ind{B}$ for some $v \in \mathbb{R}^m$ and a ball $B$, then $\sigma$ may charge the interior of $B$ (depending on the coverings chosen in Proposition \ref{prop:reduction_cubes_basis}), while $\sigma'$ is supported on $\partial B$.
\end{remark}

\subsection{Links with the wave cone}\label{subsec:wavecone}
We recall the definition of the \emph{wave cone} (see \cite{dph_rindler_afree_original}) of the operator $\Ac \in \op{d,m,n}$:
\begin{equation}
    \Lambda_{\Ac}
    \coloneqq
    \bigcup_{e \in \mathbb{S}^{d-1}} \ker \mathbb{A}[e],
\end{equation}
where $\mathbb{A}$ is the Fourier symbol of the operator $\Ac$; namely, if we write the operator in the form $\mathcal{A}f = \sum_{i=1}^d M^i \partial_i f$, where $M^i \in \mathbb{R}^{n \times m}$, then the Fourier symbol $\mathbb{A}$ is the linear function
\begin{equation}\label{eq:Fourier_symbol}
   \mathbb{A} \colon \xi \in \mathbb{R}^d
   \longmapsto
   \sum_{i=1}^d \xi_i M^i \in \mathbb{R}^{n \times m}.
\end{equation}
With the notations of this paper, if $\Ac=(A^1,\dots,A^n)$, then it is easily seen that $A_{ij}^k = M_{kj}^i$. Thus, for $f \in \R^m$, it holds the equivalence
\begin{equation}\label{eq:wave_cone_caratt}
    f \in \Lambda_{\Ac} \iff
    \exists e \in \mathbb{S}^{d-1} \colon e \perp A^k f
    \quad
    \forall k \in \{1,\dots,n\}.
\end{equation}
This characterization in particular shows that the wave cone is invariant for equivalent operators, namely $\Ac \cong \mathcal{B} \implies \Lambda_{\Ac} = \Lambda_{\mathcal{B}}$.
\begin{remark}\label{prop:links_wave_cone}
    Given a first-order operator $\Ac$, assume that $f \in \R^m$ is such that there exist $\Bc \cong \Ac$, $g_1,\dots,g_n$ and $e$ satisfying the properties of Condition \ref{hyp:solving_2}. Then $f \in \Lambda_{\Ac}$.
    In particular, if $\Ac$ satisfies Condition \ref{hyp:solving_2}, then $\spn (\Lambda_{\Ac})= \R^m$.

    The converse is false: in general, even though $\spn (\Lambda_{\Ac})= \R^m$, the operator $\Ac$ may be not be $k$-balanceable for any $k \in \{1,\dots,d-1\}$. As an example, consider any scalar operator $\Ac=(A)$ with $\rank A=1$.
\end{remark}
% \begin{proof}
%     \eqref{eq:e_perp_Bf} and \eqref{eq:wave_cone_caratt} yield $f \in \Lambda_{\Bc}$, hence $f \in \Lambda_{\Ac}$ by $\Ac \cong \Bc$.
% \end{proof}

% \begin{remark}
%     The converse is false: in general, even though $\spn (\Lambda_{\Ac})= \R^m$, the operator $\Ac$ may be not be $k$-balanceable for any $k \in \{1,\dots,d-1\}$, as shows every scalar operator $\Ac=(A)$ with $\rank A=1$.
% \end{remark}

\subsection{Operators that do not satisfy Condition \ref{hyp:solving_2} and the divergence on symmetric matrix-valued vector fields}\label{sec:negative_examples}
Some examples of operators that do not satisfy Condition \ref{hyp:solving_2} may be produced in a rather straight-forward way:
\begin{itemize}
    \item By Subsection \ref{subsec:wavecone}, any operator $\Ac$ such that $\spn\Lambda_\Ac\neq\R^m$ does not satisfy Condition \ref{hyp:solving_2}. 
    This includes elliptic operators (that is those for which $\Lambda_\Ac=\{0\}$), like $\Ac=\nabla\in\op{d,m,d\cdot m}$ and $\Ac=(\dive,\curl)\in\op{d,d,1+\frac{d(d-1)}{2}}$.
    \item By Theorem \ref{thm:main_scalar}, any scalar operator $\Ac=(A)$ with $\rank A=1$ is not $(d-1)$-balanceable, thus it cannot satisfy Condition \ref{hyp:solving_2}.
    \item By the previous point, any vector-valued operator $\Ac=(A^1,\dots,A^n)$ so that there exists $B\in\spn\{A^1,\dots,A^n\}$ with $\rank B=1$ cannot satisfy Condition \ref{hyp:solving_2}.
    \item Analogously, if $\Ac=(A^1,\dots,A^n)$ and $\spn\{A^1,\dots,A^n\}$ contains a lower-dimensional elliptic operator, for example in the case of a two-dimensional $(\dive,\curl)$
    \begin{equation}
        A^1=\begin{pmatrix}
            1&0&0\\0&1&0\\0&0&0
        \end{pmatrix},\qquad
        A^2=\begin{pmatrix}
            0&1&0\\-1&0&0\\0&0&0
        \end{pmatrix},
    \end{equation}
    then $\Ac$ is not $k$-balanceable, thus it cannot satisfy Condition \ref{hyp:solving_2}. 
\end{itemize}

Another case of interest for Question \ref{prb:alberti} is given by the divergence acting on vector fields valued in $\mathbb{R}_{\mathrm{sym}}^{d \times d}$, the space of symmetrix ($d\times d$)-matrices. More precisely this operator is $\Ac \colon C^{\infty}(\R^d,\mathbb{R}_{\mathrm{sym}}^{d \times d}) \to C^{\infty}(\R^d,\R^d)$ acting as
\begin{equation}
    \dive
    \begin{pmatrix}
        f_{11}  & f_{12} & \cdots & f_{1d}
        \\
        f_{12}  & f_{22} & \cdots & f_{2d}
        \\
        \vdots  & \vdots & \ddots & \vdots
        \\
        f_{1d}  & f_{2d} & \cdots & f_{dd}
    \end{pmatrix}
    =
    \begin{pmatrix}
        \dive(f_{11}, f_{12}, \cdots, f_{1d})
        \\
        \dive(f_{12}, f_{22}, \cdots, f_{2d})
        \\
        \vdots
        \\
        \dive(f_{1d}, f_{2d}, \cdots, f_{dd})
    \end{pmatrix}.
\end{equation}
According to Question \ref{prb:alberti}, one may ask whether the operator is $k$-balanceable for some $k$, namely if to any map in $L^1(\R^d,\mathbb{R}_{\mathrm{sym}}^{d \times d})$ it is possible to add a $k$-rectifiable measure with values in $\mathbb{R}_{\mathrm{sym}}^{d \times d}$ in order to make the sum divergence-free.

Notice that, even if $\mathbb{R}_{\mathrm{sym}}^{d \times d}$ is a subspace of $\mathbb{R}^{d \times d}$ and the divergence on $\mathbb{R}^{d \times d}$ is $k$-balanceable for every $k \in \{1,\dots,d-1\}$ by application of Theorem \ref{thm:main_scalar}, this does not imply that so is $\Ac$, since the $k$-rectifiable measure produced to balance a symmetric vector field is not symmetric in general.

Unfortunately, Theorem \ref{thm:main_vector_valued} does not give an answer to the above question, since we may prove that $\Ac$ does not satisfy Condition \ref{hyp:solving_2} for $d=2$.
Although (to the best of our knowledge) the question remains open, we think that the following result is of independent interest, as an example of a non-trivial operator (as the ones presented earlier in this section may be) that does not satisfy Condition \ref{hyp:solving_2}.
\begin{proposition}\label{prop:sym_dive}
        The divergence operator $\Ac \colon C^{\infty}(\R^2,\mathbb{R}_{\mathrm{sym}}^{2 \times 2}) \to C^{\infty}(\R^2,\R^2)$ does not satisfy Condition \ref{hyp:solving_2}.
\end{proposition}
\begin{proof}
        We observe that $\dim \mathbb{R}_{\mathrm{sym}}^{2 \times 2}= 3$, thus $\Ac \in \op{2,3,2}$, namely $m\coloneqq 3$ and $n=2$.  We order the components of the symmetric matrix by rows and we call $\{e_1,e_2\}$ the standard orthonormal basis of $\R^2$ and $\{f_1,f_2,f_3\}$ the standard orthonormal basis of $\R^3$.
    It follows that $\Ac=(A^1,A^2)$,  where $A^k \in \mathbb{R}^{2 \times 3}$ are the following matrices:
    \begin{equation}
        A^1 = \begin{pmatrix}
            1           &       0   &       0
            \\
            0           &       1   &       0
        \end{pmatrix},
        \qquad
        A^2 = \begin{pmatrix}
            0           &       1   &       0
            \\
            0           &       0   &       1
        \end{pmatrix}.
    \end{equation}
    Let us assume, by contradiction, that there exists a basis $\mathcal{F}$ of $\R^3$ for which Condition \ref{hyp:solving_2} is satisfied and let us fix $f \in \mathcal{F}$ with non-trivial component in the direction $f_2$, namely $f=\alpha_1 f_1 + \alpha_2 f_2 + \alpha_3 f_3$ with $\alpha_2 \neq 0$. Let us fix $\mathcal{B}=(B^1,B^2) \cong \mathcal{A}$, $e$ and $g_1$ satisfying the properties of Condition \ref{hyp:solving_2}.
    
    We first remark that \eqref{eq:hyp_suff_2a}, \eqref{eq:e_perp_Bf} and $d=2$ imply
    \begin{equation}
        \dim \Big(\spn(B^1 f, B^2 f)\Big)=\dim \Big( \spn(A^1 f,A^2f) \Big)\leq 1,
    \end{equation}
    while $A^1 f= \alpha_1 e_1 + \alpha_2 e_2$ gives $\dim \big(\spn(A^1 f, A^2 f)\big) = 1$. By simple computations this implies
    \begin{equation}
        f \in \spn(f_1+\alpha f_2 + \alpha^2 f_3)
        \qquad
        \text{for some $\alpha \in \R \setminus\{0\}$.}
    \end{equation}
    Without loss of generality, we can assume $f=f_1+\alpha f_2 + \alpha^2 f_3$ and
    \begin{equation}
        B^1=\lambda_1 A^1 + \mu_1 A^2,
        \qquad
        B^2=\lambda_2 A^1 + \mu_2 A^2.   
    \end{equation}
    Since $B^1 f \in \spn(A^1 f)= \spn(e_1 + \alpha e_2)$ and $|B^1 f|=1$, we have
    \begin{equation}\label{eq:sym_dive_contr}
    \begin{split}
        (1+\alpha^2)^{-\frac{1}{2}}(e_1+\alpha e_2)=B^1 f
    & \iff
    (1+\alpha^2)^{-\frac{1}{2}}(e_1+\alpha e_2) = (\lambda_1 A^1 + \mu_1 A^2)(f_1+\alpha f_2 + \alpha^2 f_3)
   %  =
   % \lambda_1 e_1 + \lambda_1 \alpha e_2 + \mu_1 \alpha e_1 + \mu_1 \alpha^2 e_2 
   \\
   & \iff
   (1+\alpha^2)^{-\frac{1}{2}} = \lambda_1 + \mu_1 \alpha.
    \end{split}
    \end{equation}
    On the other hand, since $B^2 f=0$, it holds
    \begin{equation}
    \begin{split}
        0 = B^2 f
        & \iff
        0 = (\lambda_2 A^1 + \mu_2 A^2)(f_1 + \alpha f_2 + \alpha^2 f_3)
        =
        (\lambda_2 + \mu_2 \alpha)e_1 + \alpha(\lambda_2 + \mu_2 \alpha)e_2
        \\
        & \iff
        \lambda_2 =- \mu_2 \alpha.
    \end{split}
    \end{equation}
    Without loss of generality we can choose $\mu_2=1$, thus obtaining $B^2=-\alpha A^1 + A^2$.
    \eqref{eq:hyp_suff_2b} implies $B^2 g_1 \perp B^1 g_1$ and \eqref{eq:cons_1_hyp_solving_2} gives $B^2 g_1 \perp B^1 f$; since $\dim \big( \spn (B^1 g_1, B^1 f)\big)=2$, it must hold $B^2 g_1=0$. Hence, if $g_1=c_1f_1+c_2f_2+c_3f_3$, we have
    \begin{equation}
        0 =(-\alpha A^1+A^2)(c_1f_1+c_2f_2+c_3f_3)
        =
        -\alpha c_1 e_1 - \alpha c_2 e_2 + c_2 e_1 + c_3 e_2
        \iff
        (c_1,c_2,c_3)=(c,\alpha c, \alpha^2 c).
    \end{equation}
    On the other hand it holds
    \begin{equation}
        B^1 g_1
        =
        c(\lambda_1 A^1+ \mu_1 A^2)(f_1 + \alpha f_2 +\alpha^2 f_3)
        % =
        % c(\lambda_1 e_1 + \lambda_1 \alpha e_2 + \mu_1 \alpha e_1 + \mu_1 \alpha^2 e_2)
        =
        c(\lambda_1 + \alpha \mu_1)e_1 + c \alpha(\lambda_1 + \alpha \mu_1)e_2.
    \end{equation}
     Since Condition \ref{hyp:solving_2} implies $B^1 g_1 \perp B^1 f= e_1 + \alpha e_2$, the above equation yields
     \begin{equation}
          (\lambda_1 + \alpha \mu_1) + \alpha^2 (\lambda_1 + \alpha \mu_1)=0,
     \end{equation}
     which in turn implies $(\lambda_1 + \alpha \mu_1)=0$, since $\alpha \neq 0$, but this contradicts \eqref{eq:sym_dive_contr}.
\end{proof}

\subsection{Another operator that satisfies Condition \ref{hyp:solving_2}}\label{sec:positive_example}
We conclude this collection of remarks by providing an example of vector-valued operator which satisfies Condition \ref{hyp:solving_2}; we notice that the matrices defining $\Ac$ have no evident antisymmetry.
\begin{example}\label{ex:vector_valued_condition}
    Let us consider the operator $\Ac=(A_1,A_2) \in \op{3,5,2}$, where
\begin{equation}
A^1
=
\begin{pmatrix}
1	& 	0	&	0	&	0	&	0\\
0	&	0	& 	0	&	0	&	1\\
0   &   0   &   1   &   0   &   0
\end{pmatrix}
,
\qquad
A^2
=
\begin{pmatrix}
0	& 	0	&	0	&	1	&	0\\
1	&	0	& 	0	&	0	&	0\\
0   &   1   &   0   &   0   &   0
\end{pmatrix}
\end{equation}
and let $e_1,e_2,e_3$ and $f_1,\dots,f_5$ be the vectors of the standard orthonormal bases respectively of $\mathbb{R}^3$ and $\mathbb{R}^5$.
We claim that $\Ac$ satisfies Condition \ref{hyp:solving_2} with the basis $\mathcal{F}\coloneqq \{f_1,f_5,f_2,f_3,f_4,f_5\}$.

\begin{itemize}
    \item It easy to see that $f_2,\dots,f_5$ satisfy properties of Condition \ref{hyp:solving_2}; for instance, for $f=f_2$, one can choose $B^1=A^2$, $B^2= A^1$, $g_1=f_4$, $g_2 = 0$ and $e=e_1$. The others are similar.
    \item For $f=f_1$, then the choices $B^1 = A^1$, $B^2 = A^2$, $g_1=f_3$, $g_2=f_2$ and $e=e_3$ work.
\end{itemize}
For the sake of completeness, we observe that the wave cone of $\mathcal{A}$ is $\mathbb{R}^5$ (as one can see by \eqref{eq:wave_cone_caratt} since $d>n$).
\end{example}

\section{A Lusin-type property}\label{sec:lusin}

In this section we prove Theorem \ref{thm:Lusin_main}.
The proof is based on Lemma \ref{lmm:lusin} below. With this result at hand, the proof of \citep[Theorem 1]{alberti1991lusin} can be repeated without any modification, except for replacing $Du$ with $h$ such that $\Ac h=0$.

\begin{lemma}\label{lmm:lusin}
	Let $\Omega \subset \R^d$ be an open set with finite measure and let $\Ac \in \op{d,m,n}$ be a first order operator satisfying Condition \ref{hyp:solving_2}. For every continuous and bounded function $f:\Omega\to\R^m$ and every $\varepsilon,\eta>0$, there exist a compact set $K$ and $h\in C^\infty_c(\Omega;\R^m)$ such that	\noeqref{eq:lusin_lemma_a,eq:lusin_lemma_b,eq:lusin_lemma_c,eq:lusin_lemma_d}
	\begin{subequations}\label{eq:lusin_lemma}
		\begin{gather}
			\Lc^d(\Omega \setminus K) < \varepsilon \Lc^d(\Omega);\label{eq:lusin_lemma_a}
			\\[2pt]
			|f-h|< \eta \qquad \text{on } K;\label{eq:lusin_lemma_b}
			\\[2pt]
			\Ac h=0\qquad\text{in }\Omega;\label{eq:lusin_lemma_c}
			\\[2pt]
			\norm{h}_{L^p(\Omega)} \leq C_1 \varepsilon^{\frac{1}{p}-1} \norm{f}_{L^p(\Omega)}\label{eq:lusin_lemma_d} \qquad
   \forall p \in [1,+\infty],
		\end{gather}
  where the constant $C_1$ depends only on $\Ac$.
	\end{subequations}
\end{lemma}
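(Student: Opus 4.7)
The plan is to approximate $f$ by a step function $\tilde f$ on a disjoint family of small balls inside a compact $K_0\compact\Omega$, balance each ball via Proposition \ref{prop:hyp_sufficient_solving} to produce a $(d-1)$-rectifiable $\Ac$-free distribution $\tilde f+\sigma$ with $\sigma$ supported on the union of the spheres, and then mollify at a scale $\delta$ proportional to the ball radius. The main obstacle will be the $L^p$ estimate \eqref{eq:lusin_lemma_d}, which requires exploiting both the $L^\infty$ control and the thin Lebesgue support of the convolution of $\sigma$.

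First I would fix $K_0\compact\Omega$ with $\Lc^d(\Omega\setminus K_0)\le\varepsilon\Lc^d(\Omega)/3$; uniform continuity of $f$ on $K_0$ lets me pick $r<\dist(K_0,\de\Omega)/2$ small enough that the oscillation of $f$ on every ball of radius $2r$ in $K_0$ is less than $\eta$. A (finitely iterated) Vitali covering produces a disjoint family $\{\bar B_{r_i}(x_i)\}\subset K_0$ with $r_i\in[r/2,r]$ and $\Lc^d(K_0\setminus\bigcup_i B_{r_i}(x_i))\le\varepsilon\Lc^d(\Omega)/3$. Setting $c_i\coloneqq\fint_{B_{r_i}(x_i)}f\dif\Lc^d$ and $\tilde f\coloneqq\sum_i c_i\ind{B_{r_i}(x_i)}$, Jensen's inequality yields $\|\tilde f\|_{L^p}\le\|f\|_{L^p}$ for every $p$, and $|\tilde f-f|<\eta$ on $K_0\cap\bigcup_i B_{r_i}(x_i)$.

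For every $v\in\mathcal{F}$, Proposition \ref{prop:hyp_sufficient_solving} (combined with $\Bc\cong\Ac$) yields $g_v\in C^1(\de B_1(0),\R^m)$ with $\Ac(v\ind{B_1(0)}+g_v\hu\rest\de B_1(0))=0$ and $\|g_v\|_{L^\infty}\le C$, $C$ depending only on $\Ac$ (by the explicit formula in the proof). Expanding $c_i$ in the basis $\mathcal{F}$ and rescaling via $\tau_i(y)=x_i+r_iy$, the scaling relation $\Ac\big(\tau_{i\#}\mu\big)=r_i^{-1}\tau_{i\#}(\Ac\mu)$ for first-order operators provides measures $\sigma_i=G_i\hu\rest\de B_{r_i}(x_i)$ with $\Ac(c_i\ind{B_{r_i}(x_i)}+\sigma_i)=0$ and $|G_i|\le C|c_i|r_i$. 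Set $\sigma\coloneqq\sum_i\sigma_i$, $\delta\coloneqq\varepsilon r/C_0$ for a large constant $C_0$, and $h\coloneqq\rho_\delta*(\tilde f+\sigma)$, where $\rho$ is a standard mollifier with $\supp\rho\subset B_1$. Then $\Ac h=0$ and $h\in\cic(\Omega,\R^m)$ (since $\delta<\dist(K_0,\de\Omega)$), giving \eqref{eq:lusin_lemma_c}. Setting $S\coloneqq\bigcup_i\de B_{r_i}(x_i)$ and $K\coloneqq K_0\cap\bigcup_i B_{r_i}(x_i)\setminus\{\dist(\cdot,S)\le\delta\}$, the bound $\Lc^d(\{\dist(\cdot,S)\le\delta\})\le C\sum_ir_i^{d-1}\delta\le CC_0^{-1}\varepsilon\Lc^d(\Omega)$ gives \eqref{eq:lusin_lemma_a} for $C_0$ large, while on $K$ one has $\rho_\delta*\sigma\equiv0$ and $\rho_\delta*\tilde f=\tilde f$ (since $B_\delta(x)\subset B_{r_i}(x_i)$ for $x\in K\cap B_{r_i}(x_i)$), yielding \eqref{eq:lusin_lemma_b}.

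The hard part is \eqref{eq:lusin_lemma_d}. Young's inequality gives $\|\rho_\delta*\tilde f\|_{L^p}\le\|\tilde f\|_{L^p}\le\|f\|_{L^p}$. For the singular part, the density bound combined with $\|\rho_\delta\|_{L^\infty}\le C\delta^{-d}$ and the flatness estimate $\hu(\de B_{r_i}(x_i)\cap B_\delta(x))\le C\delta^{d-1}$ (valid for $\delta\le r_i$) yields
\begin{equation}
\|\rho_\delta*\sigma_i\|_{L^\infty}\le C|c_i|r_i/\delta\le C|c_i|/\varepsilon,
\end{equation}
while $\supp(\rho_\delta*\sigma_i)$ has Lebesgue measure $\le Cr_i^{d-1}\delta\le C(\delta/r_i)\Lc^d(B_{r_i}(x_i))$. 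With bounded overlap of these thin neighborhoods,
\begin{equation}
\|\rho_\delta*\sigma\|_{L^p}^p\le C\sum_i\Big(\frac{|c_i|}{\varepsilon}\Big)^p\frac{\delta}{r_i}\Lc^d(B_{r_i}(x_i))\le C\frac{\delta}{r}\varepsilon^{-p}\|\tilde f\|_{L^p}^p=C\varepsilon^{1-p}\|f\|_{L^p}^p,
\end{equation}
so $\|\rho_\delta*\sigma\|_{L^p}\le C\varepsilon^{1/p-1}\|f\|_{L^p}$, and \eqref{eq:lusin_lemma_d} follows. The subtlety is that the sharp exponent $\varepsilon^{1/p-1}$ arises precisely from interpolating the $L^\infty$ blow-up $\sim\varepsilon^{-1}$ (obtained via the density control on $\sigma_i$ and the flatness of $\de B_{r_i}(x_i)$ at scale $\delta$) against the thin Lebesgue support of $\rho_\delta*\sigma_i$; a naive use of Young's inequality $\|\rho_\delta*\sigma\|_{L^p}\le\|\rho_\delta\|_{L^p}|\sigma|(\Omega)$ would give only the worse exponent $\varepsilon^{-d(1-1/p)}$.
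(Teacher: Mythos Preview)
Your approach via mollification is different from the paper's and mostly sound, but there is a genuine gap in the covering step. You claim that a ``finitely iterated Vitali covering'' yields disjoint balls with $r_i\in[r/2,r]$ covering $K_0$ up to measure $\varepsilon\Lc^d(\Omega)/3$. This is false: the packing density of disjoint balls with radii in a fixed bounded ratio is strictly less than $1$ in $\R^d$ for $d\ge2$, so for small $\varepsilon$ no such family exists. And the lower bound $r_i\ge r/2$ is essential to your argument as written: you use it to get $\delta/r_i\le 2\delta/r$ in the final $L^p$ chain, to invoke the flatness estimate $\hu(\de B_{r_i}\cap B_\delta)\le C\delta^{d-1}$ (which needs $\delta\lesssim r_i$), and to get bounded overlap of the $\delta$-neighborhoods of the spheres. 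If instead you use the genuine Vitali theorem (radii $r_i\in(0,r]$, covering $K_0$ up to a null set), all three of these break down simultaneously; the analysis can be rescued, but it requires splitting into large balls ($r_i\ge\delta$) and small balls ($r_i<\delta$) and a separate overlap argument for each regime, which is substantially more than what you wrote.

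The paper avoids this entirely by a different construction: rather than mollifying $v\ind{B_1}+g\hu\rest\de B_1$, it builds directly a smooth $\Ac$-free function
\[
u(x)=\phi(|x|)\,v-|x|\,\phi'(|x|)\,g\!\left(\tfrac{x}{|x|}\right),
\]
where $\phi\in C_c^\infty([0,1))$ with $\phi\equiv1$ on $[0,1-\beta]$ and $\beta=\varepsilon/(2d)$. A short radial computation (using that $B^kg\perp\nu_{\de B_1}$ from Lemma~\ref{lmm:equivalent_Afree}) shows $\Ac u=0$, and crucially $\supp u\subset B_1$. Rescaling to each Vitali ball gives pieces with \emph{disjoint} supports, so no overlap analysis is needed and arbitrary radii are allowed. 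The $L^p$ estimate then follows ball by ball from $\|u\|_{L^p(B_1)}\le C\beta^{1/p-1}$. The simplest way to repair your argument is to imitate this ``inward smearing'': apply Proposition~\ref{prop:hyp_sufficient_solving} on the smaller sphere $\de B_{(1-2\beta)r_i}(x_i)$ and mollify at scale $\beta r_i$, so that each mollified piece is supported in $B_{r_i}(x_i)$.
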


\begin{proof}
	Let $\mathcal{F}\coloneqq\{f_1,\dots,f_m\}$, $B^1,\dots,B^n$, $e_1,\dots,e_p$ and $e\coloneqq e_{p+1}$ be as in Condition \ref{hyp:solving_2}.
	Up to changing coordinates, we may assume that $e_1,\dots,e_{p+1}$ are the first $p+1$ vectors of the standard orthonormal basis of $\R^d$ and that $|f_i|=1$ for all $f_i\in\mathcal{F}$.
    
	\begin{description}
	    \item[$\bullet$ Step 1] We first show that, given any $v\in\mathcal{F}$ and $\beta>0$, we may find $u\in C^\infty_c(B_1,\R^m)$ such that
	\begin{gather}
		u\equiv v\mbox{ in }B_{1-\beta},\label{eq:u_coincides_v}\\
		\Ac u=0,\label{eq:h_is_afree}\\
		||u||_{L^p(B_1)}\le C_1 \beta^{1/p-1}
  \qquad
  \forall p \in [1,+\infty],\label{eq:stima_su_h}
	\end{gather}
	where $C_1$ is a constant depending only on $\Ac$.
	
	We proceed as follows. By Proposition \ref{prop:hyp_sufficient_solving}, there exists $g:\de B_1\to\R^m$ such that
	\begin{equation}\label{eq:solve_for_v}
		\Ac(v\ind{B_1}+g\Hc{d-1}\rest\de B_1)=0.
	\end{equation}
	We define
	\begin{equation}
		u(x) \coloneqq \phi(|x|)v -|x|\phi'(|x|) g\left( \frac{x}{|x|} \right),
	\end{equation}
	where $\phi\in C^\infty_c([0,1))$ is such that $\phi\equiv1$ on $[0,1-\beta]$ and $||\phi'||_{L^\infty}\le \frac{2}{\beta}$.
    Notice that $u \equiv v$ on $B_{1-\beta}$ and $u\in C_c^\infty(B_1,\R^m)$. In order to prove \eqref{eq:h_is_afree}, by Lemma \ref{lmm:Amu_dive} we need to show that $\dive (B^k u)=0$ for all $k \in \{1,\dots,n\}$. 
    We first consider any test function $\chi\in C^\infty_c(\R^d\setminus\{0\})$ and we compute
	\begin{equation}\label{eq:dive_Bg}
		\begin{split}
			\iprod{\dive \left( |x|B^k g\left( \frac{x}{|x|} \right) \right)}{\chi}
			&=
			-\iprod{|x|B^k g\left( \frac{x}{|x|} \right)}{\nabla \chi}
			\\
			&=
			- \int_0^{+\infty} t\int_{\de B_t} \nabla \chi(x) \cdot B^k g\left( \frac{x}{t} \right) \dif \Hc{d-1}(x) \dif t
			\\
			&=
			- \int_0^{+\infty} t^{d} \int_{\de B_1} \nabla \chi(tx) \cdot B^k g\left( x\right) \dif \Hc{d-1}(x) \dif t
			\\
			&=
			\int_0^{+\infty} t^{d-1} \iprod{\dive \big( B^kg \Hc{d-1} \rest \de B_1 \big)}{\chi(t\, \cdot)} \dif t
			\\
			\overset{\eqref{eq:solve_for_v}}&{=}
			\int_0^{+\infty} t^{d-1} \int_{\de B_1} \big( B^k v \cdot \nu_{B_1}(x)\big) \chi(t x) \dif \Hc{d-1}(x)\dif t
			\\
			&=
			\int_0^{+\infty} \int_{\de B_t} \big( B^k v \cdot \nu_{B_t}(x)\big) \chi(x) \dif \Hc{d-1}(x)\dif t
			\\
			&=
			\iprod{B^k v \cdot \frac{x}{|x|}}{\chi}.
		\end{split}
	\end{equation}
	We therefore have
	% \begin{align}
	% 	\dive(B^k u(x))
 %  & =
 %  \phi'(|x|)\frac{x}{|x|}\cdot B^kv - \phi'(|x|) \frac{x}{|x|} \cdot B^kg \left(\frac{x}{|x|}\right)
 %  \\
 %  & \qquad -|x|\phi''(|x|)\frac{x}{|x|}\cdot B^kg\left(\frac{x}{|x|}\right) - \phi'(|x|)B^kv\cdot\frac{x}{|x|}
 %  \\
 %  & =
 %  0,
	% \end{align}
 	\begin{align}
		\dive(B^k u(x))
  =
  \phi'(|x|)\frac{x}{|x|}\cdot B^kv   
  -|x|\phi''(|x|)\frac{x}{|x|}\cdot B^kg\left(\frac{x}{|x|}\right) - \phi'(|x|)B^kv\cdot\frac{x}{|x|}
 =
  0,
	\end{align}
	where in the second equality the fact that $B^k g \perp \nu_{B_1}$ (by \eqref{eq:Ag_perp_face}) was used.
	Thus $\Ac u=0$ by Lemma \ref{lmm:Amu_dive} and \eqref{eq:h_is_afree} is proven.
	In order to prove \eqref{eq:stima_su_h}, we use the fact that $\norm{g}_\infty\le C$ for some $C$ depending only on $\Ac$, hence on $B_1\setminus B_{1-\beta}$ it holds
	\begin{align*}
		|u(x)|\le C'(1+|\phi'(x)|)\le \frac{C''}{\beta}
	\end{align*}
	for some $C',C''$ large enough depending only on $\Ac$. In particular, 
	\begin{equation*}
		\norm{u}_{L^p(B_1)}\le C_1\beta^{1/p-1}
  \qquad
  \forall p \in [1,+\infty],
	\end{equation*}
    where $C_1$ depends only on $\Ac$.

	\item[$\bullet$ Step 2] We now proceed with the proof of the result.	
	Arguing component-wise, we may assume without loss of generality that there exists $v\in\mathcal{F}$ such that $f(x)\in\spn v$ for every $x$.
    Let us fix a compact set $K' \subset \Omega$ such that $\Lc^d(\Omega \setminus K')< \frac{\varepsilon}{2}\Lc^d(\Omega)$.
    By uniform continuity of $f$ on compact subsets of $\Omega$ there exists $\delta>0$ such that, if $x \in K'$, then $B_{2\delta}(x) \subset \Omega$ and
	\begin{equation}\label{eq:uniform_continuity_cubes}
		|f(y)-f(z)|< \eta \qquad
		\forall y,z \in B_{\delta}(x).
	\end{equation}
    By Vitali's covering theorem (\citep[Theorem 2.8.17]{federer2014geometric}), there exists a countable collection of mutually disjoint open balls $\Fc\coloneqq\{B_i\coloneqq B_{r_i}(x_i)\}_{i\in\N}$ with $r_i\in(0,\delta]$ and $x_i\in K'$ for every $i$ such that
    \begin{equation}
        \Lc^d\left(K'\setminus\bigcup_{B_i\in\Fc}B_i\right)=0.
    \end{equation}
    Given $B_i=B_{r_i}(x_i)\in\Fc$, we let $\hat{B}_i\coloneqq \overline{B_{(1-\beta)r_i}(x_i)}$, where $\beta\coloneqq\frac{\varepsilon}{2 d} \in (0,1)$, and we define
    \begin{equation}
        K\coloneqq \bigcup_{i\in\N}\hat{B}_i.
    \end{equation}
    Notice that, by definition of $\beta$, it holds
 \begin{equation}
     \Lc^d \left( B_1 \setminus \overline{B_{1-\beta}} \right)
     \leq
     d\beta\Lc^d(B_1)
     =
     \frac{\varepsilon}{2} \Lc^d \left( B_1 \right),
 \end{equation}
 therefore
\begin{equation}\label{eq:occupo_tanto_spazio}
\begin{split}
    \Lc^d \left( \Omega \setminus K\right)
 & \leq
\Lc^d(\Omega \setminus K') +
     \Lc^d \left( K' \setminus K\right)
     \\
     & \leq
     \frac{\varepsilon}{2}\Lc^d(\Omega )+
      \Lc^d \left( \bigcup_{i\in\N} \Big( B_i \setminus \hat{B}_i\Big) \right)
      \\
      & \leq
      \frac{\varepsilon}{2}\Lc^d(\Omega )+
      \frac{\varepsilon}{2} \Lc^d \left( \bigcup_{i\in\N}  B_i  \right)
      \\
      & \leq
      \varepsilon \Lc^d(\Omega).
\end{split}
 \end{equation}
Next, for all $B_i\in\Fc$, define
	\begin{equation}
		a_i \coloneqq \fint_{B_i} f \dif\Lc^d;
	\end{equation}
	by \eqref{eq:uniform_continuity_cubes}, for every $B_i\in\Fc$ and every $y\in B_i$ it holds
	\begin{equation}\label{eq:unif_contin_a_pezzi}
		|a_i - f(y)|< \eta.
	\end{equation}
 Let now
	\begin{equation}\label{eq:def_h_lusin}
		h(x)\coloneqq \sum_{i\in\N} (a_i \cdot v)\, u\left(\frac{x-x_i}{r_i}\right)\ind{B_i}(x),
	\end{equation}
	where $u$ was defined in Step 1. Then \eqref{eq:lusin_lemma_a} and \eqref{eq:lusin_lemma_b} follow from \eqref{eq:occupo_tanto_spazio} and \eqref{eq:unif_contin_a_pezzi}, respectively.
 Moreover, since $\supp u\subset B_1$, by linearity it holds
	\begin{equation*}
		\Ac h = 0.
	\end{equation*}
 Lastly, \eqref{eq:lusin_lemma_d} follows immediately from \eqref{eq:def_h_lusin} and \eqref{eq:stima_su_h} in the case $p=+\infty$ . For $p \in [1,+\infty)$, we have
 \begin{equation}
 \begin{split}
     \norm{h}_{L^p(\Omega)}
     \overset{\eqref{eq:def_h_lusin}}&{\leq}
     \sum_{i\in\N} |a_i| \norm{u }_{L^p}\big(\Lc^d(B_i)\big)^{\frac{1}{p}}
     \\
     & \leq
     \norm{u}_{L^p}\sum_{i\in\N}\big(\Lc^d(B_i)\big)^{\frac{1}{p}-1} \int_{B_i} |f|
     \\
     & \leq
     \norm{u}_{L^p}\sum_{i\in\N} \norm{f}_{L^p(B_i)}
     \\
    & \leq
     C_1
     \varepsilon^{\frac{1}{p}-1}
     \norm{f}_{L^p(\Omega)}
     \end{split}
 \end{equation}
 where in the third inequality we applied H\"older inequality and in the last one we used $\beta=\frac{\varepsilon}{2d}$, \eqref{eq:stima_su_h} and the triangular inequality for the $L^p$ norms: $|f| \geq |f|_{| \bigcup_i B_i} = \sum_i |f|_{| B_i} $ since the balls $B_i$ are mutually disjoint. \qedhere

 \end{description}
\end{proof}

\section*{Acknowledgments}
    The authors would like to express their gratitude to Guido De Philippis for suggesting the problem and for several illuminating conversations on this topic and to Giovanni Alberti for his interest in this work.
    
    L.D.M. is supported by PRIN project 2022PJ9EFL: ``Geometric Measure Theory: Structure of Singular Measures, Regularity Theory and Applications in the Calculus of Variations", partially supported by the \textsc{STARS - StG} project ``\textsc{QuASAR} - Question About Structure And Regularity of currents" and by \textsc{INDAM-GNAMPA}.
    
	C.G. is supported by the European Research Council (\textsc{ERC}), under the European Union's Horizon 2020 research and innovation program, through the project \textsc{ERC VAREG} - Variational approach to the regularity of the free boundaries (grant agreement No. 853404) and partially supported by \textsc{INDAM-GNAMPA}.

    The authors would like to thank the anonymous reviewers for their useful comments and suggestions, which greatly strengthened the manuscript.

% \addcontentsline{toc}{section}{\refname}
% \printbibliography

\bibliography{biblio}

\end{document}